\newcommand{\limess}[1]%
{

\begin{array}[t]{c}
{\rm ess\, lim}\\
{\scriptstyle #1}
\end{array}

}
\newcommand{\supess}[1]%
{

\begin{array}[t]{c}
{\rm ess\, sup}\\
{\scriptstyle #1}
\end{array}

}
\newcommand{\infess}[1]%
{

\begin{array}[t]{c}
{\rm ess\, inf}\\
{\scriptstyle #1}
\end{array}

}
\def\R{{\mathbb R}}
\def\N{{\mathbb N}}
\def\a{\alpha}
\def\b{\beta}
\def\t{\theta}
\def\d{\delta}
\def\D{\Delta}
\def\s{\sigma}
\def\t{\theta}
\def\l{\lambda}
\def\p{\partial}
\def\O{\Omega}
\def\e{\varepsilon}
\def\o{\omega}
\def\mc{\mathcal}
\def\ms{\mathscr}
\def\ua{\uparrow}
\def\da{\downarrow}
\numberwithin{equation}{section}
\theoremstyle{definition}
\theoremstyle{definition}
\newtheorem{remark}{Remark}[section]
\theoremstyle{plain}
\newtheorem{theorem}{Theorem}[section]
\newtheorem{proposition}{Proposition}[section]
\newtheorem{lemma}{Lemma}[section]
\begin{document}

\title[Non-negative solutions of a sublinear elliptic problem]
{Non-negative solutions of a sublinear elliptic problem
\vskip1cm}

\thanks{The authors have been  supported by the Research Grant PID2021-123343NB-I00 of the  Ministry of Science and Innovation of Spain, and the Institute of Interdisciplinary Mathematics of Complutense University of Madrid.}

\author{Juli\'{a}n L\'{o}pez-G\'{o}mez}
\address{Juli\'{a}n L\'{o}pez-G\'{o}mez:
Instituto Interdisciplinar de Matem\'aticas,
Universidad Complutense de Madrid,
Madrid, Spain}
\email{jlopezgo@ucm.es}
\author{Paul H. Rabinowitz}
\address{Paul H. Rabinowitz: Department of Mathematics, University of Wisconsin-Madison, Madison, WI53706, USA}
\email{rabinowi@math.wisc.edu}
\author{Fabio Zanolin}
\address{Fabio Zanolin: Dipartimento di Scienze Matematiche, Informatiche e Fisiche, Universit\`a degli Studi
di Udine, Via delle Scienze 2016, 33100 Udine, Italy}
\email{fabio.zanolin@uniud.it}

\begin{abstract}
\vskip.5cm
In this paper the existence of solutions, $(\lambda,u)$,
of the problem
$$\left\lbrace\begin{array}{ll}  -\D u=\l u -a(x)|u|^{p-1}u & \quad \hbox{in }\O,\\
  u=0 &\quad \hbox{on}\;\;\p\O,
  \end{array}\right.
$$
is explored for  $0 < p < 1$. When $p>1$, it is known that there
is an unbounded component of such solutions bifurcating from
$(\s_1, 0)$, where $\s_1$ is the smallest eigenvalue of $-\D$ in
$\O$ under Dirichlet boundary conditions on $\p\O$. These
solutions have $u \in P$, the interior of the positive cone. The
continuation argument used
when $p>1$ to keep $u \in P$
fails if $0 < p < 1$. Nevertheless when $0 < p < 1$,
we are still able to show that there is a component of
solutions bifurcating from $(\s_1, \infty)$, unbounded outside of
a neighborhood of $(\s_1, \infty)$, and having $u \gneq 0$. This
non-negativity for $u$ cannot be improved as is shown via a
detailed analysis of the simplest autonomous one-dimensional
version of the problem: its set of non-negative solutions
possesses a countable set of components, each of them consisting
of positive solutions with a fixed (arbitrary) number of bumps.
Finally, the structure of these components is fully described.
\vskip.5cm
\end{abstract}

\subjclass[2010]{35B09,35B25,35B32,35J15}

\keywords{Non-negative solutions. Sublinear elliptic problems. Bifurcation from infinity. Singular perturbations. Non-negative multi-bump solutions. Global structure.}
\vspace{0.1cm}

\date{\today}

\maketitle

\setcounter{page}{1}
\section{Introduction}

\noindent Consider the sublinear  elliptic boundary value problem
\begin{equation}
\label{1.1}
  \left\lbrace\begin{array}{ll}  -\D u=\l u -a(x)|u|^{p-1}u & \quad \hbox{in }\O,\\
  u=0 &\quad \hbox{on}\;\;\p\O,
  \end{array}\right.
\end{equation}
where $p\in (0,1)$, $\O$ is a bounded domain of  $\R^N$, $N\geq 1$, having a smooth boundary,   $\p\O$. The weight function $a:\bar\O\to\R$ is H\"{o}lder continuous and satisfies the following condition:
\begin{equation}
\label{1.2}
   a(x)>0\;\; \hbox{for all}\;\; x\in\bar \O.
\end{equation}
A classical solution of \eqref{1.1} is a pair $(\lambda, u) \in \R \times C^2(\bar\Omega).$
Let
$$
  \mathcal{C}_0^1(\bar\O)=\left\{u\in\mathcal{C}^1(\bar\O)\;:\; u|_{\p\O}=0\right\}
$$
and
$$
P = \{u \in  \mathcal{C}_0^1(\bar\O) : u > 0\;\; \hbox{in}\;\; \Omega \;\;\hbox{and} \;\; \tfrac{\p u}{\p n}<0 \;\; \hbox{on} \;\; \p\O\}.
$$
Here $n= n(x)$ is the outward unit normal to $\O$ at $x\in\p\O$. Thus, $P$ is the interior of the positive cone in $\mathcal{C}_0^1(\bar\O)$.
When $u\in P$, we write $u\gg 0$.
\par
When $p > 1$, it is known \cite[Sect. 7.1]{LG01} that \eqref{1.1} has an unbounded component of solutions that bifurcates from $(\s_1, 0)$ where $\s_1$
is the smallest eigenvalue of
\begin{equation}
\label{1.3}
  \left\lbrace\begin{array}{ll}  -\D u=\l v & \quad \hbox{in }\O,\\
  v=0 &\quad \hbox{on}\;\;\p\O,
  \end{array}\right.
\end{equation}
i.e. of the linearization of (1.1) about $u=0.$
Moreover, aside from $(\s_1, 0)$, for any $(\lambda, u)$ on this component, $u\in P$  and the projection of this component on $\R$ is $(\s_1, \infty)$. These results are a consequence of $p>1$,
the global bifurcation theorem, the properties of the smallest eigenpair of \eqref{1.3},
and a continuation argument based on the maximum principle  that forces $u$ to remain in $P$. When $N=1$, the continuation argument follows more simply from the fact that if $u$ is any solution of the associated equation that has a double zero, i.e. $u(y) = u'(y) = 0$, then $u \equiv 0$.
\par
One of the two main goals of this paper is explore to what extent
the result just mentioned for  $p>1$ carries over to the case of
$0 < p < 1$.  Towards that end, observe first that the formal
linearization of the operator in \eqref{1.1} about $u=0$ yields a
singular linear operator that does not permit bifurcation. So, the
previous existence argument does not work. However as will be
shown in detail in Section 2, due to the sublinear nonlinearity,
one can invoke a variant of the global bifurcation theorem to get
bifurcation from $\infty$ here. Indeed such results are already
known for equations related to \eqref{1.1} (see, e.g.,
\cite{Rab71,Rab73}). For the current setting, they tell us there
is a component, $\mc{K}$, of solutions of  \eqref{1.1} that is
unbounded outside of a neighborhood of $(\s_1, \infty)$. An
additional argument shows that the solutions $(\lambda, u)$ on
$\mc{K}$ near $(\s_1, \infty)$  have $u \in P$. However
unfortunately due to $0 < p < 1$, the maximum principle argument
used earlier to continue solutions within $P$ fails. Thus, it is
possible that the projection of $\mc{K}$ on
$\mathcal{C}_0^1(\bar\O)$ leaves $P$. It will be shown that
somewhat surprisingly, $\mc{K}$ has a subcomponent lying in
$\R\times (P\setminus\{0\})$, unbounded outside of a neighborhood
of $(\s_1, \infty)$, and whose $\l$-projection on $\R$ is the
interval $(\s_1, \infty)$. To obtain this result, a more indirect
approach to study $\mc{K}$ will be taken. For $\e > 0$, consider
the family of equations
\begin{equation}
\label{1.4}
  \left\lbrace\begin{array}{ll}  -\D u=\l u -a(x) g_\e(u) u & \quad \hbox{in }\O,\\
  u=0 &\quad \hbox{on}\;\;\p\O,
  \end{array}\right.
\end{equation}
where the nonlinearity $g_\e$ is chosen to satisfy
\begin{equation*}
  g_\e(u):= (\e+u)^{p-1}\quad \hbox{if}\;\; u \geq 0
  \end{equation*}
 and $g_{\e}$ is even. It will be shown that for each such $\e$, there is a connected set of solutions of \eqref{1.4} joining $(\s_1, \infty)$ to $(\lambda_1(\e), 0)$ lying in $\R \times P$. Here
 $\lambda_1(\e)$ is the smallest eigenvalue of the linearization of \eqref{1.4} about $u=0$. These new continua are approximations to part of $\mc{K}$ since letting $\e \da 0$, a further  argument yields the component of solutions in $\R \times \bar P$ just mentioned.
\par
 Our second main goal is to understand the nature of solutions of \eqref{1.1} in $(\R \times \bar P) \setminus(\R \times P)$. This is a difficult question in the generality of \eqref{1.1}. However in Section 3, an exhaustive  study will be made of the simplest one-dimensional version of \eqref{1.1}:
\begin{equation}
\label{1.5}
  \left\lbrace\begin{array}{l}  -u''=\l u -a |u|^{p-1}u  \quad \hbox{in }[0,L],\\
  u(0)=u(L)=0,
  \end{array}\right.
\end{equation}
where $a>0$ is a positive constant and $L>0$ is given.  A key role here is played by the fact that there can be nonuniqueness for solutions of the initial value problem:
\begin{equation}
\label{1.6}
  \left\lbrace\begin{array}{l}  -u''=\l u -a |u|^{p-1}u, \\
  u(0)=u_0, \;\; u'(0)=v_0, \end{array}\right.
\end{equation}
when $u_0 = 0$ since the nonlinearity does not satisfy a Lipschitz
condition at that point. Thus it is possible that \eqref{1.6}
possesses a solution such that $u(0) = 0 = u(L) = u'(0) = u'(L),
u(x) = 0$ for $x \in (0, x_1) \cup (x_2, L)$ where $0 \leq  x_1 <
x_2  \leq L$, and $u(x) > 0$ for $x \in (x_1, x_2)$. Such a
homoclinic solution of \eqref{1.6} will be referred to as a one
bump solution of the equation. The concatenation of $j$ such
solutions will be called a $j$-bump solution of \eqref{1.6}.
\par
A classification will be given of the set of non-negative solutions of \eqref{1.5}. It turns out that these solutions form a countable set of disjoint components, $\ms{D}_j^+$, $j\geq 1$, possessing the following properties:
\begin{itemize}
\item For every $j\geq 1$ and $(\l,u)\in \ms{D}_j^+$, $u$ has
exactly $j$ bumps in the interval $(0,L)$, the bumps occurring at
some points $x_i$, $1\leq i\leq j$, at which
$$
   u(x_i)\geq \left(\tfrac{2a}{(p+1)\l}\right)^\frac{1}{1-p}.
$$

\item
$\mc{P}_\l(\ms{D}_1^+)=(\s_1,\infty)$, and
$\mc{P}_\l(\ms{D}_j^+)=[\Sigma_j,\infty)$ for all $j\geq 2$, where
$\mc{P}_\l$ stands for the $\l$-projection operator, $\mc{P}_\l
(\l,u)=\l$,  and
$$
  \Sigma_j:= \left(\tfrac{2}{1-p}\right)^2 \sigma_j \quad \hbox{for all}\;\; j\geq 1, \quad \hbox{with}\;\;
  \s_j := \left( \frac{j\pi}{L}\right)^2\quad \hbox{for all}\;\; j\geq 1.
$$
\item $\ms{D}_i^+\cap \ms{D}_j^+=\emptyset$ if $i\neq j$. \item In
the interval $(\s_1,\Sigma_1]$, $\ms{D}_1^+$ is a curve,
$(\l,u_\l)$, such that $\lim_{\l \da \s_1}\|u_\l\|_\infty=\infty$.
Moreover, for every fixed $\l>\Sigma_1$, $\ms{D}_1^+$ is a
one-dimensional simplex, i.e. a line segment, which expands as
$\l$ increases.
\item For every $j\geq 2$ and $\l>\Sigma_j$,
$\ms{D}_j^+$ is a $j$--dimensional simplex, which expands as $\l$
increases.
\end{itemize}
These properties are sketched In Figure \ref{fig310b} in Section 3.
They contrast very strongly  with the behavior of the
superlinear model, with $p>1$, where the set of positive solutions
of \eqref{1.5} consists of a curve, $(\l,u_\l)$,
where $u_\l$ increases as $\l$ increases and is such that
$\lim_{\l\da \s_1}\|u_\l\|_\infty=0$.
\par
Some other work on sublinear problems will be mentioned next. In \cite{LMZ},
a very preliminary study  has been carried out recently.
The existence of a (unique) solution with $u >0$ in $(0,L)$ and $u'(0)>0$
was established for every $\l<\pi^2$
when $a$ is a negative constant.
Thus condition \eqref{1.2} is not satisfied.
Similarly, \cite{PT} analyzed the Cauchy problem
\begin{equation}
\label{1.7}
  \left\{ \begin{array}{ll} u''=a(x)u^p  & \quad \hbox{in}\;\; [0,\infty),\\
  u(0)=u_0>0,\;\; u'(0)=0,\end{array}\right.
\end{equation}
where $p\in (0,1)$ and $a \in\mc{C}^1(\R)$ is an even function, increasing in $[0,\infty)$, such that,
for some $\a>0$,
\begin{equation}
\label{1.8}
  (x-\a)a(x)\geq 0\;\;\hbox{for all}\;\; x\in [0,\infty).
\end{equation}
Although $p\in (0,1)$, the fact that in \cite{PT} the
initial condition $u_0$ is positive implies \eqref{1.7}
has a unique solution for each such $u_0$,
thereby avoiding the nonuniqueness phenomena we encounter.
According to \cite[Th. 1.1]{PT}, there exists $u_0>0$ such that
$$
  u(\xi)=u'(\xi)=0\;\;\hbox{for some}\;\; \xi:=\xi(u_0)\in\R.
$$
This suggests that the problem \eqref{1.6} for the choice $u_0=0=v_0$
might have non-zero solutions.
However no analysis of the structure of such solutions was carried out in \cite{PT}.
Note that \eqref{1.8} cannot be satisfied if $a(x)$ is a positive constant,
which is the case dealt with here in Section 3.
Some further developments were given in \cite{BPT},
where the non-negative solutions of PDE's with a sign-indefinite weight were analyzed.
\par
In another context, the second order periodic sublinear problem
\begin{equation}
\label{1.9}
  \left\{ \begin{array}{ll} u''=a(x)u^p-b(x)u^q   & \quad  x\in [0,L], \\
  u(0)=u(L), \;\; u'(0)=u'(L),\end{array}\right.
\end{equation}
where $L>0$, $a, b \in\mc{C}(\R/L,\R)$ and $0<p<q<1$,
was analyzed in \cite{CS} as a model for studying
a valveless pumping effect in a simple pipe-tank configuration.
This model has some applications to the study of blood circulation
(see \cite[Ch. 8]{To} for further details).
In particular, for positive $a(x)$ and $b(x)$
the main result of \cite{CS} establishes the solvability of \eqref{1.9}.
Indeed, the Mountain Pass Theorem \cite{AR} shows that \eqref{1.9}
has a non-trivial non-negative periodic solution.
But the problem of ascertaining the structure of these
non-trivial solutions was not addressed in \cite{CS}.
\par
Some results have been recently given in \cite{KRUJ} and \cite{KRUC}
for the multidimensional problem:
\begin{equation}
\label{1.10}
  \left\{ \begin{array}{ll} -\D u =a(x)u^p   & \quad \hbox{in} \;\; \O, \\
   u =0 & \quad \hbox{on}\;\; \p\O,\end{array}\right.
\end{equation}
where $\O$ is a bounded and smooth domain in $\R^N$, $N\geq 1$.
The function $a\in L^r(\O)$ (for $r>N$) and changes sign in $\O$.
It was shown in \cite{KRUJ} that there exists $\d>0$ such that
any nontrivial nonnegative solution of \eqref{1.10}
lies in the interior of the positive cone of $\mc{C}_0^1(\bar \O)$ if
\begin{equation}
\label{1.11}
  \|a^-\|_{L^r(\O)}\leq \d,
\end{equation}
where $a^-$ stands for the negative part of $a(x)$.
This result provides us with the
strong positivity of all positive solutions of \eqref{1.1} for $\l=0$ when $a(x)$ is positive everywhere, However this result cannot be applied to \eqref{1.1} under condition \eqref{1.2} because  in our setting, $a(x)<0$ for all $x\in \O$ and we are not imposing any restriction on the size of $a(x)$.  Thus although the existence of positive solutions
vanishing somewhere was obtained in \cite{KRUC}, our problem \eqref{1.1} remains outside the
scope of \cite{KRUJ} and \cite{KRUC}.
\par
The remainder of this paper is arranged as follows. Section 2 presents the proof
of the multidimensional existence theorem.
Section 3 provides a complete classification of
the set of non-negative solutions of the one-dimensional problem \eqref{1.5}.
A challenging open question is to provide an analogue this classification
for a general function $a(x)$.
The analysis of Section 3 suggests
the set of non-negative solutions of \eqref{1.1}
may be very complex when $N>1$ and
and further obtaining results in that direction remains an even more difficult
open question.

\section{The existence of non-negative solutions for \eqref{1.1}}\label{section-2}

\noindent In this section, our main result on the existence of non-negative solutions of \eqref{1.1} will be proved. For every smooth subdomain $D$ of $\O$ and sufficiently smooth function, $V$, on $D$, $\s_1[-\D+V,D]$ denotes the smallest eigenvalue of $-\D+V$ in $D$ under Dirichlet boundary conditions on $\p D$. Through the rest of this section, we set
\begin{equation*}
  \s_1\equiv \s_1[-\D,\O].
\end{equation*}

Before stating the main result of this section, the existence of a component of solutions of \eqref{1.1} that bifurcate from $(\sigma_1, \infty)$ will be discussed. To set the stage, following \cite{Rab73} and setting
$$
\|u\| \equiv \|u\|_{\mc{C}^1(\bar\Omega)},
$$
the  change of variables
\begin{equation}
\label{2.2}
  w:= \frac{u}{\|u\|^2}
\end{equation}
interchanges the roles of $0$ and $\infty$ and converts questions of bifurcation from $\infty$
to bifurcation from $0$ and conversely. In particular it transforms \eqref{1.1} into
\begin{equation}
\label{2.3}
    \left\lbrace\begin{array}{ll}  -\D w=\l w -a(x) \|w\|^{2(1-p)}|w|^{p-1}w & \quad \hbox{in }\O,\\
  w=0 &\quad \hbox{on}\;\;\p\O,
  \end{array}\right.
\end{equation}
whose nonlinear term,
\begin{equation*}
  g(w):=\|w\|^{2(1-p)}|w|^{p-1}w,\qquad w\in \mc{C}^1(\bar\O),
\end{equation*}
satisfies
$$
  \|g(w)\|= \|w\|^{2-p}=o(\|w\|) \quad \hbox{as}\;\; \|w\|\to 0,
$$
because $2-p>1$. Note that \eqref{2.2} implies
$$
  \|w\|=\frac{1}{\|u\|}.
$$
Thus, information about solutions of \eqref{1.1} can be obtained from
solutions of the superlinear equation \eqref{2.3}.
For what follows, $\mc{C}^2_0(\bar\O)$ stands for the closed subspace of $\mc{C}^2(\bar\O)$ consisting of all functions $w\in \mc{C}^2(\bar\O)$ vanishing on $\p\O$. Let $w \in \mc{C}^1(\bar\O)$. Then by the Schauder linear elliptic existence and regularity theories, there is a unique $T(\l,w) \in \mc{C}^2_0(\bar\O)$ satisfying
\begin{equation}
\label{2.4}
   \left\lbrace\begin{array}{ll} -\D T(\l,w)=\l w -a(x) \|w\|^{2(1-p)}|w|^{p-1}w & \quad \hbox{in}\;\; \O,\\  T(\l,w)=0, &\quad \hbox{on}\;\;\p\O.
  \end{array}\right.
\end{equation}
Finding a solution of \eqref{2.3} (for fixed $\l$) is equivalent to finding a fixed point of the operator $T(\l,w)$.
Note that $T(\l,\cdot): \mc{C}^1(\bar\O)\to \mc{C}^1_0(\bar\O)$ is compact and $\sigma_1$ is a simple eigenvalue of the linearization of \eqref{2.3} about $w=0$. Therefore, applying Theorems 1.6, 2.28 and Corollary 1.8 of \cite{Rab73}, there exists a component of the set of nontrivial solutions of \eqref{1.1}, $\mathscr{C}$, bifurcating from $u=\infty$ at $\l=\s_1$. Moreover, there is a neighborhood, $O$, of $(\s, \infty)$ in which $\mathscr{C}$ decomposes into a pair of subcontinua, $\mathscr{C^+}$ and $\mathscr{C^-}$ such that
$$
O \cap \mathscr{C} = \mathscr{C^+} \cup \mathscr{C^-},\;\;
\mathscr{C^+} \cap \mathscr{C^-} = \{(\s_1, \infty)\}\;\; \hbox{and} \;\; \mathscr{C}^{\pm}
\subset  \{(\sigma_1, \infty)\} \times (\pm P).
$$
Remember that $P$ stands for the interior of the positive cone in $\mathcal{C}_0^1(\bar\O)$.
\par
Let $\mc{P}_\l$ denote the projector of $\R \times \mc{C}^1(\bar \Omega)$ to $\R$.
It is straightforward to show that $\mc{P}_\l \mathscr{C}^+ $ is contained in the interval $(\s_1,\infty)$ and that \eqref{1.1} cannot admit a positive solution if $\l\leq \s_1$.
While points $(\l, u)$ on $\mathscr{C}^+$ near $(\s_1, \infty)$ have $u \in P$, $\mathscr{C}^+$ may exit $\R \times P$ outside of the set $O$.
However somewhat surprisingly,  our main result in this section says in general there is a large component of solutions of \eqref{1.1} in $\R \times (\bar P\setminus\{0\})$, namely:

\begin{theorem}
\label{th2.1} If \eqref{1.2} holds, there is a component, $\ms{D}^+$, of the set of  nonnegative solutions of \eqref{1.1} with $(\s_1,\infty)\in \bar{\ms{D}}^+ \subset \R \times (\bar P\setminus\{0\})$. Moreover, $\mc{P}_\l(\ms{D}^+)=  (\s_1,\infty)$, and there is a neighborhood, $\hat O$, of $(\s_1, \infty)$ such that $(\l, u) \in \hat O \setminus \{(\s_1, \infty)\}$ implies $u \in P$, i.e. $u\gg 0$.
\end{theorem}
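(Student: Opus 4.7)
The plan is to construct $\ms{D}^+$ as a Whyburn-type limit, as $\e \da 0$, of components of positive solutions of the regularised family \eqref{1.4}. For each $\e > 0$ the map $u \mapsto g_\e(u)\,u$ is globally Lipschitz and differentiable at $u = 0$ with derivative $\e^{p-1}$, so the linearisation of \eqref{1.4} about $u = 0$ is the \emph{regular} Schrödinger operator $-\D + a(x)\,\e^{p-1}$. Its smallest Dirichlet eigenvalue $\l_1(\e) := \s_1[-\D + a\,\e^{p-1}, \O]$ is simple with principal eigenfunction in $P$, and Rabinowitz's global bifurcation theorem provides an unbounded component $\mathscr{C}_\e$ of nontrivial solutions of \eqref{1.4} emanating from $(\l_1(\e), 0)$, whose positive subcontinuum is confined to $\R \times P$; here the $\mc{C}^1$-continuation argument used in the superlinear case \cite[Sect. 7.1]{LG01} applies verbatim because $g_\e$ is Lipschitz.

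Testing \eqref{1.4} against the first Dirichlet eigenfunction $\varphi_1 \gg 0$ of $-\D$, every $(\l, u) \in \mathscr{C}_\e \cap (\R \times P)$ satisfies
$$
(\l - \s_1)\int_\O u\,\varphi_1\, dx = \int_\O a(x)\,(\e+u)^{p-1} u\,\varphi_1\, dx,
$$
which yields $\l > \s_1$ and, after rescaling $u = \|u\|_\infty v$ and using $p < 1$, forces $\l \da \s_1$ whenever $\|u\|_\infty \to +\infty$ (the rescaled equation becomes linear in the limit, so $v$ converges to the principal eigenfunction of $-\D$). Since $\e^{p-1} \to +\infty$ as $\e \da 0$, we also have $\l_1(\e) \to +\infty$. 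Hence $\mathscr{C}_\e$ travels inside $\R \times P$ from $(\l_1(\e), 0)$ to the vicinity of $(\s_1, \infty)$, and its $\l$-projection covers $(\s_1, \l_1(\e))$. On any compact subset of $\R \times \mc{C}^1_0(\bar\O)$ separated from $\{(\s_1, \infty)\}$ and from $\R \times \{0\}$, Schauder interior estimates provide $\e$-uniform $\mc{C}^{2,\alpha}$-bounds. Applying the topological lemma of Whyburn along a diagonal sequence $\e_n \da 0$, and exhausting the half-strip $(\s_1, +\infty) \times (\bar P \setminus \{0\})$ by such compact sets, I produce a connected set $\ms{D}^+$ of nonnegative solutions of \eqref{1.1} whose closure contains $(\s_1, \infty)$, with $\ms{D}^+ \subset \R \times (\bar P \setminus \{0\})$ and $\mc{P}_\l(\ms{D}^+) = (\s_1, \infty)$.

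The existence of a neighbourhood $\hat O$ of $(\s_1, \infty)$ on which every nontrivial point of $\ms{D}^+$ has $u \in P$ is then inherited from the Rabinowitz subcontinuum $\mathscr{C}^+$ bifurcating from infinity at $\s_1$, recalled before the statement: for $(\l, u)$ sufficiently close to $(\s_1, \infty)$ the rescaling $u/\|u\|_\infty$ converges to the principal eigenfunction of $-\D$, which lies in $P$, so Hopf's lemma together with elliptic regularity places $u$ itself in $P$. The principal obstacle is the $\e \da 0$ limit: one must verify that the Whyburn limit remains \emph{connected}, is \emph{nontrivial} at every $\l > \s_1$, and does not collapse to the single point $(\s_1, \infty)$. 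This calls for a priori bounds uniform in $\e$, together with a careful choice of the exhausting family so as to control $\mathscr{C}_\e$ simultaneously as $\e \da 0$. A secondary subtlety is that the limit may possess interior zeros—the sublinearity destroys strong positivity in the limit—so only $u \in \bar P \setminus \{0\}$ is guaranteed outside $\hat O$, in agreement with the explicit one-dimensional picture developed in Section~3.
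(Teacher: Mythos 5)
Your overall strategy coincides with the paper's: regularise with $g_\e(u)=(\e+u)^{p-1}$, obtain for each $\e>0$ a component of positive solutions of \eqref{1.4} joining $(\l_1(\e),0)$ to $(\s_1,\infty)$, and pass to the limit $\e\da 0$ with Whyburn's separation lemma. However, you explicitly leave open what you yourself call ``the principal obstacle'' --- the $\e$-uniform a priori bounds that keep the limit continuum from collapsing to $\{(\s_1,\infty)\}$, from degenerating to trivial solutions, or from having bounded $\l$-projection --- and these bounds are precisely the mathematical content of the paper's proof. They are obtained from the monotonicity of the principal eigenvalue with respect to the potential: writing the regularised equation as $\bigl(-\D+a(x)(\e+u)^{-(1-p)}\bigr)u=\l u$ with $u\in P$ forces $\l=\s_1\bigl[-\D+a(x)(\e+u)^{-(1-p)},\O\bigr]$, whence
$$
\l \;\geq\; \s_1+a_L\left(\e+\|w\|^{-1}\right)^{p-1},\qquad a_L=\min_{\bar\O}a>0,
$$
in the inverted variable $w=u/\|u\|^2$. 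Since $p-1<0$, this shows that bounded $\l$ forces $\|w\|$ bounded (equivalently $\|u\|$ bounded away from $0$), which is what makes the cylinder $D_b=(0,b)\times B_M(0)$ in the Whyburn argument work; and the companion estimate (Proposition 2.2(d) of the paper, proved by a rescaling--compactness argument) gives, for each $\l>\s_1$, a lower bound $r(\l)>0$ for $\|w\|$ independent of $\e$, i.e.\ an upper bound on $\|u\|$, which rules out the trivial limit. Your appeal to ``Schauder interior estimates on compact sets separated from $(\s_1,\infty)$ and from $\R\times\{0\}$'' presupposes that the continua $\mathscr{C}_\e$ actually meet such compact sets uniformly in $\e$, which is exactly what must be proved.

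Two further points are glossed over. First, having extracted limits of solutions of the $\e$-problems, one must verify that the limit function solves \eqref{1.1}; the paper does this by working on the open set $\{w^*>0\}$ and extending by continuity across its boundary and the set where $w^*\equiv 0$ (in the $u$-variable this can alternatively be done via the uniform convergence $(\e+u)^{p-1}u\to u^p$ on bounded sets, but some argument is needed). Second, the claim $\mc{P}_\l(\ms{D}^+)=(\s_1,\infty)$ does not follow from ``exhausting the half-strip by compact sets'': the paper proves it by contradiction, assuming the component $\ms{S}_0$ through $(\s_1,0)$ has bounded projection $[\s_1,b]$, using the a priori bound to confine $\ms{S}_0$ in a bounded cylinder, and then invoking Whyburn's lemma to separate $\ms{S}_0$ from $\ms{S}\cap(\{b\}\times\mc{C}^1(\bar\O))$, contradicting the fact that $\ms{S}_0$ must meet the boundary of every bounded neighbourhood of $(\s_1,0)$. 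Until you supply the eigenvalue-monotonicity estimates and this separation argument, the proposal is an accurate outline of the paper's route rather than a proof.
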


 It should be stressed that
$$
  \bar P =\{u\in\mc{C}_0^1(\bar\O)\;:\; u\geq 0\;\;\hbox{in}\;\;\bar\O\;\;
 \hbox{and} \;\; \tfrac{\p u}{\p n}\leq 0 \;\; \hbox{on} \;\; \p\O \},
$$
and that $P=\mathrm{int\,}\bar P$.

\begin{remark}
\label{re2.1} Theorem \ref{th2.1} still holds, with an almost identical proof, if, instead of \eqref{1.2}, we assume that $a\gneq 0$ with $a^{-1}(0)=\bar\O_0\subset \O$ for some smooth $\O_0$, though in this case we can only guarantee that $\mc{P}_\l(\ms{D}^+)\subset (\s_1,\s_0)$,
where $\s_0$ = $\s_1[-\D,\O_0]$. In this case, multiplying the differential equation by the associated principal eigenfunction and integrating by parts shows that $\l\in (\s_1,\s_0)$ if \eqref{1.1} has a positive solution. It remains an open problem to ascertain whether or not, $\mc{P}_\l(\ms{D}^+)= (\s_1,\s_0)$ in the degenerate case. This degenerate
problem will be dealt with elsewhere.
\end{remark}

Our proof of Theorem \ref{th2.1} requires a rather different approach than was taken to obtain $\mathscr{C}$. Equation \eqref{1.1} will be approximated by a family of related equations for which one has bifurcation from both $0$ and $\infty$ and Theorem \ref{th2.1} will be obtained via a limit process using the properties of the new family of equations.
Accordingly, for $\e > 0$, consider the family of equations
\begin{equation}
\label{2.5}
 \left\lbrace\begin{array}{ll}
 -\D u=\l u -a(x) g_\e(u) u & \quad \hbox{in }\O,\\
  u=0 &\quad \hbox{on}\;\;\p\O,
  \end{array}\right.
\end{equation}
where the function $g_\e$ satisfies
\begin{equation*}
  g_\e(u):= (\e+u)^{p-1}\quad \hbox{if}\;\; u \geq 0
  \end{equation*}
  and $g_\e$ is even.
 Because of the form of $g_\e$,

$$
  g_\e(z)=\e^{p-1}+(p-1)\e^{p-2}z+O(z^2) \qquad \hbox{as}\;\; z\to 0.
$$
Define $\l_1(\e)$ by
\begin{equation}
\label{2.6}
  \l_1(\e):= \s_1\Big[-\D +\frac{a(x)}{\e^{1-p}},\O\Big],\qquad \e>0,
\end{equation}
i.e.  $\l_1(\e)$ is the smallest eigenvalue of the linearization of \eqref{2.5}  about $u = 0$. Similarly, $\s_1=\s_1[-\D,\O]$.  Equation \eqref{2.6} implies that $\lambda_1(\e) \to \infty$ as $\e \to 0$
(see item (b) in the next proposition). Thus for convenience in what follows, it can be assumed that $\e$ is small enough so that
\begin{equation*}
 \s_1 < \l_1(\e).
\end{equation*}
Corresponding to \eqref{2.5} is its transformed version under the change of variables \eqref{2.2}:
 \begin{equation}
\label{2.7}
\left\lbrace\begin{array}{ll}
 -\D w=\l w -a(x) \frac{1}{(\e +w/ \|w\|^2)^{1-p}} w & \quad \hbox{in }\O,\\
  w=0 &\quad \hbox{on}\;\;\p\O.
  \end{array}\right.
\end{equation}
As in \eqref{2.4}, we use \eqref{2.7} to define $T_\e(\l,w)$ via
\begin{equation*}
\left\lbrace\begin{array}{ll}
-\D T_\e(\l,w)=\l w -a(x) \frac{1}{(\e +w/ \|w\|^2)^{1-p}} w & \quad \hbox{in }\O,\\
  T_\e(\l,w)=0 &\quad \hbox{on}\;\;\p\O.
  \end{array}\right.
\end{equation*}
Then, the compact operator form of \eqref{2.7} is
\begin{equation}
\label{2.8}
w = T_\e(\l,w).
\end{equation}
The next result studies some properties of the solutions of \eqref{2.5}.
Recall that $\l_1(\e)$ is defined in \eqref{2.6}.
\begin{proposition}
\label{pr2.2}
Suppose $a(x)>0$ for all $x\in\bar \O$. Then:
 \begin{enumerate}
\item[{\rm (a)}] For each $\e > 0$, there is a component of solutions, $\ms{D}_\e^+$, of \eqref{2.5} joining $(\s_1,\infty)$ and $(\lambda_1(\e), 0)$. Moreover aside from these endpoints, $(\l, u) \in {\ms{D}_{\e}^+}$ implies  $\l \in   (\s_1,\lambda_1(\e))$ and $u \in  P$.
\item[{\rm (b)}]$\lambda_1(\e) \to \infty$ as $\e \to 0$.
\item[{\rm (c)}] For each $\e > 0$, there is a component of solutions, $\ms{E}_\e^+$, of \eqref{2.7} joining $(\s_1,0)$ and $(\lambda_1(\e), \infty)$. Moreover aside from these endpoints, $(\l, u) \in {\ms{E}_{\e}^+}$ implies  $\l \in   (\s_1,\lambda_1(\e))$ and $u \in  P$.
\item[{\rm (d)}]   For each $\l > \s_1$, there is an $r(\l) >0$ that is independent of $\e$ such that whenever $(\l, w) \in \ms{E}_{\e}^+ \setminus\{(\s_1,0) \cup (\lambda_1(\e), \infty)\}$,  then
$\|w\|_{C^1(\bar \Omega)}>r(\l).$
\end{enumerate}
\end{proposition}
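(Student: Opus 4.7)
I would prove the four items in the order (b), (c), (a), (d). Item (b) is immediate from the variational characterization
\begin{equation*}
\lambda_1(\e) \;=\; \inf_{u\in H_0^1(\O)\setminus\{0\}} \frac{\int_\O |\nabla u|^2 + \e^{p-1}\int_\O a(x)\, u^2}{\int_\O u^2} \;\geq\; \s_1 + (\min_{\bar\O} a)\,\e^{p-1},
\end{equation*}
since $\min_{\bar\O} a>0$ by \eqref{1.2} and $\e^{p-1}\to\infty$ as $\e\downarrow 0$ because $p<1$.

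For (c), I would apply the Rabinowitz global bifurcation theorem to the compact fixed-point form \eqref{2.8} of \eqref{2.7}. The inequality $(\e + w/\|w\|^2)^{p-1}|w|\leq |w|^p\,\|w\|^{2(1-p)}$ shows the nonlinear term is $o(\|w\|)$, so the linearization at $w=0$ is $-\D w=\l w$, whose simple principal eigenvalue is $\s_1$. Rabinowitz's theorem produces a global continuum through $(\s_1,0)$ with a positive subcontinuum $\ms{E}_\e^+$ that remains in $\R\times P$, by the strong maximum principle and Hopf's lemma applied to $-\D w + V(x)w=\l w$ with the bounded coefficient $V(x)=a(x)(\e+w/\|w\|^2)^{p-1}\leq a(x)\e^{p-1}$. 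Two integration-by-parts identities then pin down $\l$: pairing the equation with the principal eigenfunction $\varphi_1>0$ of $-\D$ gives $\l>\s_1$, while pairing with the principal eigenfunction $\varphi_1^\e>0$ of $-\D+a(x)\e^{p-1}$ combined with the elementary inequality $(\e+w/\|w\|^2)^{p-1}<\e^{p-1}$ (valid since $p<1$) gives $\l<\l_1(\e)$. The Rabinowitz alternative in the positive cone (see \cite{Rab71,Rab73}) then forces $\ms{E}_\e^+$ to be unbounded, since $\s_1$ is the only principal eigenvalue of $-\D$; as $\l$ is trapped in $(\s_1,\l_1(\e))$, the unboundedness must occur in the $w$-variable. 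To identify the endpoint at infinity, I would rescale: for a sequence $(\l_n,w_n)\in\ms{E}_\e^+$ with $\|w_n\|\to\infty$, the normalization $z_n:=w_n/\|w_n\|$ satisfies $-\D z_n+a(x)\e^{p-1}z_n=\l_n z_n+o(1)$ because $w_n/\|w_n\|^2\to 0$ uniformly; Schauder estimates then produce a subsequential limit $z^*\geq 0$ with $\|z^*\|=1$ solving $-\D z^*+a(x)\e^{p-1}z^*=\l^* z^*$, which forces $\l^*=\l_1(\e)$.

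Item (a) then follows from (c) by applying the involution $u=w/\|w\|^2$ of \eqref{2.2}, which is a homeomorphism of $\R\times(\mc{C}_0^1(\bar\O)\setminus\{0\})$ that swaps $0$ and $\infty$ while preserving $\l$ and the cone $P$; the set $\ms{D}_\e^+$ is the image of $\ms{E}_\e^+\setminus\{(\s_1,0)\}$ augmented by $(\l_1(\e),0)$, and inherits the $\l$-bounds and positivity from $\ms{E}_\e^+$. For (d), I would argue by contradiction: if no uniform $r(\l)$ exists, there are $\e_n>0$ and $(\l,w_n)\in\ms{E}_{\e_n}^+\setminus\{(\s_1,0),(\l_1(\e_n),\infty)\}$ with $\|w_n\|\to 0$. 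Setting $v_n:=w_n/\|w_n\|$ gives $v_n\geq 0$, $\|v_n\|_{\mc{C}^1(\bar\O)}=1$, and
\begin{equation*}
-\D v_n \;=\; \l\, v_n - a(x)\, v_n\, (\e_n + v_n/\|w_n\|)^{p-1}.
\end{equation*}
The $\e$-free bound $v_n(\e_n+v_n/\|w_n\|)^{p-1}\leq v_n^p\,\|w_n\|^{1-p}$, valid since $p-1<0$ makes $x\mapsto x^{p-1}$ decreasing on $(0,\infty)$, controls the nonlinear term in $L^\infty$ by $C\,\|w_n\|^{1-p}\to 0$ uniformly in $\e_n$. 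Schauder estimates then produce a $\mc{C}^1$-subsequential limit $v^*\geq 0$ with $\|v^*\|_{\mc{C}^1(\bar\O)}=1$ satisfying $-\D v^*=\l v^*$; as a nontrivial non-negative Dirichlet eigenfunction of $-\D$ must correspond to the principal eigenvalue, this forces $\l=\s_1$, contradicting $\l>\s_1$.

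The main obstacle is the blow-up identification in (c): one must upgrade the abstract unboundedness furnished by the Rabinowitz alternative to genuine blow-up precisely at $\l=\l_1(\e)$, which is the bifurcation-from-infinity side of the problem and requires the rescaling above combined with compactness. The uniform-in-$\e$ nonlinear estimate in (d) is a smaller but analogous point, and is exactly where the decreasingness of $x\mapsto x^{p-1}$ on $(0,\infty)$ is genuinely used.
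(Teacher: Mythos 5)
Your proposal is correct, and it is essentially the paper's argument with the two halves swapped: the paper applies the positive-cone global bifurcation theorem (Theorem 2.12 of \cite{Rab71}) directly to \eqref{2.5}, obtaining $\ms{D}_\e^+$ bifurcating from $(\l_1(\e),0)$, traps $\l$ in $(\s_1,\l_1(\e))$ via the eigenvalue identity \eqref{2.10} and monotonicity of $\s_1[-\D+V,\O]$ in $V$ (your eigenfunction-pairing identities are the same fact), identifies $(\s_1,\infty)$ as the only admissible bifurcation point at infinity, and then gets (c) from (a) by the inversion \eqref{2.2}; you instead run the bifurcation-from-zero analysis on the transformed equation \eqref{2.7} at $(\s_1,0)$ and deduce (a) from (c) by the same inversion. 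Both routes hinge on the same two facts — for fixed $\e>0$ the coefficient $a(x)(\e+\cdot)^{p-1}\leq a(x)\e^{p-1}$ is bounded, so the strong maximum principle keeps the continuum in $P$, and the trapped $\l$-projection forces the unboundedness into the function variable. Your version requires the extra check that the nonlinearity of \eqref{2.7} is $o(\|w\|)$ (which you supply correctly) and an explicit rescaling to identify the endpoint $(\l_1(\e),\infty)$, a step the paper handles on the \eqref{2.5} side where the corresponding endpoint $(\s_1,\infty)$ is identified more tersely. For (d) your argument coincides with the paper's, and your uniform-in-$\e$ bound $v_n\,(\e_n+v_n/\|w_n\|)^{p-1}\leq v_n^p\,\|w_n\|^{1-p}$ is precisely the justification, left implicit in the paper, for the claim that the nonlinear term in \eqref{2.13} tends to zero; note also that this bound shows the contradiction argument needs no assumption that the offending sequence of $\e$'s tends to zero, which is the correct level of generality for the statement of (d).
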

\begin{proof}
By Theorem 2.12 of \cite{Rab71}, \eqref{2.5} has a component, $\mathscr{D}_\e$, of solutions lying in $\R \times P$ that bifurcates from $(\l_1(\e), 0)$ and is unbounded (see also \cite[Ch. 6]{LG01} for more details). Rewriting \eqref{2.5} as
\begin{equation}
\label{2.9}
  \left\lbrace\begin{array}{ll}
 (-\D +\frac{a(x)}{(\e +u)^{1-p}}) u=\l u & \quad \hbox{in }\O,\\
  u=0 &\quad \hbox{on}\;\;\p\O,
  \end{array}\right.
\end{equation}
and noting that $u \in P$ shows that $\l$ in \eqref{2.9} satisfies
\begin{equation}
\label{2.10}
\l = \s_1\Big[-\D + \frac{a(x)}{(\e +u)^{1-p}}, \O\Big].
\end{equation}
Since $\s_1[-\D+V,\O]$ depends monotonically on $V$ (see, e.g., \cite{LG96,LG13}), it follows from \eqref{1.2} that
$$
\s_1[-\D +\frac{a(x)}{\e^{1-p}},\O] = \l_1(\e) > \s_1[-\D + \frac{a(x)}{(\e +u)^{1-p}}, \O] = \l >
\s_1[-\D, \O] = \s_1.
$$
Thus, $\l\in (\s_1,\l_1(\e))$. In particular,
$$
  \mc{P}_\l ( \ms{D}_\e)\subset [\s_1,\l_1(\e)].
$$
It remains to show that $\mc{P}_\l ( \ms{D}_\e)= [\s_1,\l_1(\e)]$. The definition of $\mathscr{D}_\e$ shows that $\l_1(\e)$ must be the right endpoint of $\mc{P}_\l ( \ms{D}_\e)$. Since $\ms{D}_\e$ is unbounded, there is a sequence of points on it converging to a bifurcation point at infinity. But the only such point that can be the limit of members of $\R \times P$ is $(\s_1, \infty)$. Thus $(a)$ is proved.
\par
For $(b)$, note that, again by the monotonicity property of $\s_1[-\D+V,\O]$ with respect to $V$,
$$
\l_1(\e) > \s_1[-\D +\frac{a_L}{\e^{1-p}},\O] = \frac{a_L}{\e^{1-p}}+\s_1[-\D, \O] \to \infty \;\;
\hbox{as}\; \; \e \da 0,
$$
where
$$
   a_L:= \min_{\bar \O}a>0.
$$
The next item, $(c)$, is an immediate consequence of $(a)$ and the inversion \eqref{2.2}.
To prove $(d)$, suppose it is false. Then there is a $\l > \s_1$, a sequence, $\e_i \to 0$ as $i \to \infty$, and a sequence $(\l, w_i) \in \ms{E}_{\e_i}$ with
$$
   \lim_{i\to \infty} \|w_i\|_{\mc{C}^1(\bar \O)}= 0.
$$
Setting
$$
    v_i := \frac{w_i}{\|w_i\|},\qquad i\geq 1,
$$
rewriting \eqref{2.7} for $\e= \e_i$ and $w = w_i$, and dividing by $\|w_i\|$, yields:
\begin{equation}
\label{2.13}
\left\lbrace\begin{array}{ll}
 -\D v_i=\l v_i -a(x) \frac{1}{(\e_i +w_i/ \|w_i\|^2)^{1-p}} v_i & \quad \hbox{in }\O,\\
  v_i=0 &\quad \hbox{on}\;\;\p\O.
  \end{array}\right.
\end{equation}
As $i \to \infty$, the nonlinear term in \eqref{2.13} goes to $0$ while due to the compactness of
the inverse of $-\D, \;v_i$ converges in $\mc{C}^2(\bar \O)$ to a solution, $v$, of
\begin{equation}
\label{2.14}
\left\lbrace\begin{array}{ll}
 -\D v=\l v  & \quad \hbox{in }\O,\\
  v=0 &\quad \hbox{on}\;\;\p\O,
  \end{array}\right.
\end{equation}
where $v \geq 0$ in $\O$ and $\|v\| =1$. It follows that $v$ is the unique positive
eigenfunction of norm $1$ of \eqref{2.14}, the entire sequence converges to $v$ and $\l = \s_1$. But this contradicts the hypothesis that $\l>\s_1$. So, item (d) follows.
\end{proof}

\begin{remark}
\label{rm2.2}  Part (d) of Proposition \ref{pr2.2} readily implies that, for any subinterval
$[\alpha, \beta]$ of $(\s_1, \l_1(\e))$, there is an $R(\alpha, \beta)>0$ independent of $\e$ such that
$$
   \|w\|_{\mc{C}^1(\bar \Omega)}>R(\alpha, \beta)
$$
whenever $\l \in [\alpha, \beta]$ and $(\l, w) \in \ms{E}_{\e^+} \setminus\{(\s_1,0) \cup (\lambda_1(\e), \infty)\}$.
\end{remark}

Now we are ready for the
\par
\vspace{0.2cm}

\noindent\textbf{Proof of Theorem \ref{th2.1}:}
Let $B$ be a bounded open neighborhood of $(\s_1,0)$ in $\mc{C}^1(\bar \O)$. Then, for each $\e > 0$,
by Parts (b) and  (c) of Proposition \ref{pr2.2},
\begin{equation}
\label{2.15}
    \ms{B}_\e \equiv \p B \cap \ms{E}_{\e} \cap ((\s_1, \infty) \times P)) \neq \emptyset.
\end{equation}
Let $(\e_i)$ be a decreasing sequence such that $\e_i \to 0$ as $i \to \infty$, and choose any sequence $(\l_i, w_i) \in \ms{B}_{\e_i}$. According to \eqref{2.15}, the functions $w_i$ are bounded in $\mc{C}^1(\bar \O)$ with $w_i \in P$. Thus they are classical solutions of \eqref{2.7}  and its associated compact operator equation \eqref{2.8}. Rewriting \eqref{2.8} for $\e =\e_i$, $\l= \l_i$ and $w = w_i$, as
\begin{equation}
\label{2.16}
\left\lbrace\begin{array}{ll}
-\D T_{\e_i}(\l_i,w_i)= \l_i w_i-a(x) (\frac{w_i}{\e_i + w_i /\|w_i\|^2})^{1-p} w_i^{p} & \quad \hbox{in }\O,\\  T_{\e_i}(\l_i,w_i)=0 &\quad \hbox{on}\;\;\p\O,
  \end{array}\right.
  \end{equation}
the Schauder linear existence and regularity theories show the functions, $w_i$, are bounded in $\mc{C}^{2, \gamma}(\bar \O)$ for some $\gamma > 0$. Therefore, since the right hand side of the differential equation in \eqref{2.16} is bounded in $\mc{C}(\bar \O)$, along a subsequence, $(\l_i, w_i)$ converges to a
$$
   (\l^*, w^*) \in \p B \cap( [\s_1, \infty)) \times \bar P))
$$
with the convergence in $w_i$ being in $\mc{C}^2(\bar \O)$. By Part (d) of Proposition \ref{pr2.2} and  Remark \ref{rm2.2}, $w^* \not \equiv 0$ unless $\l^* = \s_1$. But then $(\s_1, 0) \in \p B$, contrary to the choice of $B$.
\par
Next we claim that $w^*$ is  a solution of \eqref{2.5} for $\l=\l^*$. Indeed, let
$$
A(w^*) := \{x \in \O\;:\; w^*(x) > 0\}.
$$
Since $w^*$ is continuous, $A(w^*)$ is open. Let $z \in A(w^*)$. Then,
from \eqref{2.7} with $\e =\e_i$, $\l= \l_i$ and $w = w_i$, we see $(\l^*, w^*)$ satisfies \eqref{2.3} in a neighborhood of $z$ in $A(w^*)$. Hence, $(\l^*, w^*)$ satisfies \eqref{2.3} in all of $A(w^*)$. But
$w^* \in \mc{C}^2(\bar \O)$. Thus, by continuity, $-\D w^*= 0$ on $\p A(w^*)$ and $w^* \equiv 0$ in
$\O \setminus \overline{A(w^*)}$. Therefore $(\l^*, w^*)$ satisfies \eqref{2.3} as well on these sets. Lastly, $w^*= 0$ on $\p \O$. Consequently $(\l^*, w^*)$ satisfies \eqref{2.3}.
\par
The argument just employed applies to any $(\l, w)$ that is obtained by the same limit process. Let
$\ms{S}$ denote the closure of the set of such solutions of \eqref{2.3}
in $\R \times \mc{C}^1(\bar \O)$.
Note that due to Part (d) of Proposition \ref{pr2.2} and Remark \ref{rm2.2}, $\ms{S}$ contains no "trivial solutions" $(\l,0)$ of \eqref{2.3} other than $(\s_1, 0)$. However, it might contain solutions $(\l,w)$ with $w\in \p P \setminus\{0\}$ that are limits of solutions $(\l_i,w_i)$ with $w_i\in P$.
\par
To complete the proof of Theorem \ref{th2.1}, it must be shown that $\ms{S}$ contains a component whose $\l$-projection on $\R$ is $[\s_1, \infty)$. Then, the transformation \eqref{2.2} applied to $\ms{S}$ provides $\ms{D}^+$ of Theorem \ref{th2.1} and the proof is complete. If Theorem \ref{th2.1} is false, the component, $\ms{S}_0$, of $\ms{S}$ to which $(\s_1,0)$ belongs projects on $\R$  to a bounded interval $[\s_1, b]$, for some $b>\s_1$, the lower endpoint being $\s_1$ via our construction. We claim there is an $M = M(b) > 0$ such that
\begin{equation}
\label{2.17}
  (\l, w) \in \ms{S}_0\;\; \hbox{implies}\;\; \|w\| < M.
\end{equation}
To see this, note that, by construction,  any $(\l,w) \in \ms{S}_0$ is a $\mc{C}^2(\bar \O)$--limit of solutions, $(\l_j, w_j)$, $j\geq 1$, of \eqref{2.7} with $\e = \e_j$. Without loss of generality, it can be assumed that $\l_j < b+1$ for all $j \in \N$. On the other hand, by \eqref{2.10},
\begin{align*}
\l_j  & = \s_1[-\D + \frac{a(x)}{(\e_j +w_j/\|w_j\|^2)^{1-p}}, \O], \qquad j\geq 1.
\end{align*}
Thus, since $a\geq a_L$ and
$$
   \frac{w_j}{\|w_j\|^2}\leq \|w_j\|^{-1},\qquad j\geq 1,
$$
it follows from the monotonicity of the principal eigenvalue with respect to the potential that
\begin{equation}
\label{2.18}
 \l_j \geq \s_1 +a_L\left(\e_j + \|w_j\|^{-1}\right)^{p-1},\qquad j\geq 1.
 \end{equation}
Arguing by contradiction, suppose that
$$
  \lim_{j\to \infty}\|w_j\|=\infty.
$$
Then, since $p-1<0$, letting $j\to \infty$ in \eqref{2.18} yields $\l_j\to\infty$ if $j\to \infty$, which contradicts $\l_j < b+1$. Consequently, there exists $M(b)$ satisfying \eqref{2.17}.
\par
For any given $d>0$, let  $B_d(0)$ denote the open ball of radius $d$ about $0$ in
$\mc{C}^1(\bar \O)$, and consider the open cylinder
$$
   D_{b} \equiv (0,b) \times B_M(0).
$$
This is a bounded open neighborhood of $(\s_1,0)$ in $\R \times \mc{C}^1(\bar \O)$. Therefore, there is a $(\mu, \varphi) \in \ms{S}_0\cap \p D_{b}$. According to \eqref{2.17}, $\mu = b$. Now we
need the following topological result from \cite{Why} --- see (9.3) on page 12.

\begin{lemma}
\label{le2.1}
If $X$ and $Y$ are disjoint closed subsets of a compact metric space, $K$, such that no component of $K$ intersects both $X$ and $Y$, then there is a separation of $K$ into $K _X$ and $K_Y$, i.e.
$$
   K = K_X \cup K_Y, \qquad  K_X \cap K_Y = \emptyset,
$$
and $K_X$, $K_Y$ are compact with $X \subset K_X$, $Y \subset K_Y$.
\end{lemma}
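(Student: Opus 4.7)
The statement is the classical \v{S}ura-Bura / Whyburn separation lemma, and I would prove it by constructing a clopen set $K_X \subset K$ that contains $X$ and is disjoint from $Y$, then setting $K_Y := K \setminus K_X$. The driving fact, which is where compactness of $K$ is genuinely used, is that in a compact Hausdorff (in particular compact metric) space the component of a point $x$ coincides with its quasi-component, i.e. with the intersection of all clopen subsets of $K$ containing $x$.

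The first step is to show that for every $x \in X$ there exists a clopen set $V_x \subset K$ with $x \in V_x$ and $V_x \cap Y = \emptyset$. Let $C_x$ denote the component of $x$ in $K$; by hypothesis $C_x \cap Y = \emptyset$, and by the fact just recalled $C_x = \bigcap \mathcal{F}_x$, where $\mathcal{F}_x$ is the family of all clopen subsets of $K$ containing $x$. Observe that $\mathcal{F}_x$ is closed under finite intersections. For each $y \in Y$, since $y \notin C_x$, there is some $W \in \mathcal{F}_x$ with $y \notin W$. The open sets $K \setminus W$ obtained in this way form an open cover of the compact set $Y$, so I can extract finitely many $W_1, \ldots, W_m \in \mathcal{F}_x$ whose complements already cover $Y$. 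Then $V_x := W_1 \cap \cdots \cap W_m$ is clopen in $K$, contains $x$, and is disjoint from $Y$.

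The second step uses compactness of $X$: the collection $\{V_x\}_{x \in X}$ is an open cover of the compact set $X$, so it admits a finite subcover $V_{x_1}, \ldots, V_{x_n}$. I would then set
$$
K_X := V_{x_1} \cup \cdots \cup V_{x_n}, \qquad K_Y := K \setminus K_X.
$$
Being a finite union of clopen sets, $K_X$ is clopen, contains $X$, and satisfies $K_X \cap Y = \emptyset$ by construction. Consequently $K_Y$ is also clopen, contains $Y$, and $K = K_X \cup K_Y$ with $K_X \cap K_Y = \emptyset$; compactness of each piece follows from that of $K$.

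The only nontrivial ingredient is the identification of components with quasi-components in a compact Hausdorff space, and that is the step I expect to be the main obstacle (or at least the only one requiring a prior appeal to point-set topology). Once it is in hand, the rest is a straightforward twofold extraction of finite subcovers, first over $Y$ to produce each $V_x$, and then over $X$ to assemble $K_X$.
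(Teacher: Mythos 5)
Your proof is correct. Note, however, that the paper does not prove this lemma at all: it is quoted verbatim from Whyburn's \emph{Topological Analysis} (result (9.3) on page 12 of \cite{Why}), so there is no internal argument to compare against. What you have written is essentially the standard textbook proof of that separation theorem: reduce to the fact that in a compact Hausdorff space components coincide with quasi-components, then use two finite-subcover extractions (over $Y$ to build a clopen neighborhood $V_x$ of each $x\in X$ missing $Y$, and over $X$ to assemble the clopen set $K_X$). All the steps check out: $Y$ and $X$ are closed in the compact $K$, hence compact, so both extractions are legitimate, and the resulting $K_X$ and $K_Y=K\setminus K_X$ are clopen, hence compact, and separate $X$ from $Y$ as required. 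The one place where your write-up is not self-contained is exactly the one you flag: the identity ``component $=$ quasi-component'' in a compact Hausdorff space. That is a genuinely nontrivial point-set fact (its usual proof is itself a compactness argument showing that the quasi-component is connected), so if the intent were a fully self-contained proof you would need to supply it; as a cited standard result it is perfectly acceptable, and arguably more informative than the paper's bare reference to \cite{Why}.
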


For our setting, $K = \ms{S} \cap \bar D_{b}$ under the induced topology from
$\R \times \mc{C}^1(\bar \O)$, and
$$
  X = \ms{S}_0,\qquad Y = \ms{S} \cap \left(\{b\} \times \mc{C}^1(\bar \O)\right).
$$
Lemma \ref{le2.1} provides the sets $K_X$ and $K_Y$. Since these later two sets are
compact and disjoint, the distance between them, $\d$, is positive. For $\omega < \delta / 3$,
let $N_{\omega}$ denote the intersection of a uniform open $\omega$ neighborhood of $K_X$ with $D_{b}$. Then, by Lemma \ref{le2.1},
\begin{equation}
\label{2.19}
\p (N_{\o}) \cap \ms{S} = \emptyset.
\end{equation}
But,  since $N_{\omega}$ is a bounded open set containing $(\s_1,0)$,
\begin{equation}
\label{2.20}
\p (N_{\omega}) \cap \ms{S}_0 \neq \emptyset,
\end{equation}
contrary to \eqref{2.19}. Thus, $\ms{S}_0$ cannot have a bounded projection on $\R$ and the
proof of Theorem \ref{th2.1} is complete. \hfill $\Box$
\par
\vspace{0.2cm}

\begin{remark}
\label{rm2.3}
It is natural to conjecture that as $\l \to \infty$, for any correspond $(\l,u)$ on $\ms{D}^+, \; u \to 0$.
Indeed that is shown in the next section for the simplest case of \eqref{1.1} when $N=1$. It remains
an open question in the more general settings.
\end{remark}

\section{The simplest one-dimensional prototype model}\label{section-3}

\noindent In this section \eqref{1.5} will be studied,
where $L>0$ and  $a>0$ are positive constants.
Our analysis will be based on a sharp phase plane analysis of
the underlying differential equation \eqref{1.3}.
Setting $u'=v$, \eqref{1.3} can be expressed as a first order system
\begin{equation}
\label{3.3}
  \left\{ \begin{array}{ll} u'= v, \\ v'=  a |u|^{p-1} u-\l u. \end{array}\right.
\end{equation}
Since the nonlinearity of \eqref{3.3} does not satisfy a Lipschitz
condition at $u=0$, one cannot expect the uniqueness of the
solution for the initial value problem associated with
\eqref{3.3}. However, as will become apparent later, except for
one particular value of the energy, there is uniqueness for the
associated Cauchy problems.
\par
Consider the function
\begin{equation}
\label{3.4}
   g(u)\equiv g_{\lambda,a}(u):=\lambda u - a |u|^{p-1}u \qquad (g(0)=0),
\end{equation}
and its primitive, or associated potential energy,
\begin{equation}
\label{3.5}
G(u)=G_{\lambda,a}(u):=\int_{0}^u g(s)\,ds = \lambda\frac{u^2}{2} - a \frac{|u|^{p+1}}{p+1}\,.
\end{equation}
Then, the system \eqref{3.3} is conservative with total energy, or
first integral given by
\begin{equation}
\label{3.6}
   E(u,v) = E_{\lambda,a}(u,v)= \frac{v^2}{2} + G(u),
\end{equation}
whose graph has been plotted in Figure  \ref{fig31}.

\begin{figure}[ht!]
    \centering
    \includegraphics[scale=0.5]{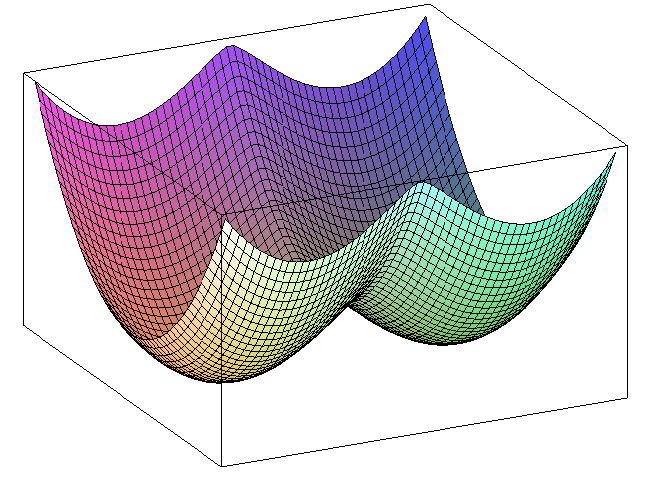}
    \caption{The surface $z=E(u,v)$.}
    \label{fig31}
\end{figure}

The system \eqref{3.3} has three equilibrium points, given  by the zeroes of $g(u)$. Namely,
$$0=(0,0) \quad \text{and }\;\; P_0^{\pm}=(u_0^{\pm},0), \quad
\text{for }\; u_0^{\pm}= u_0^{\pm}(\lambda):=
\pm\left(\frac{a}{\lambda}\right)^{\frac{1}{1-p}}.
$$
Figure \ref{fig32} shows the energy level lines of \eqref{3.6}. Every solution of \eqref{3.3}
lies on some energy level line.

\begin{figure}[ht!]
    \centering
    \includegraphics[scale=0.55]{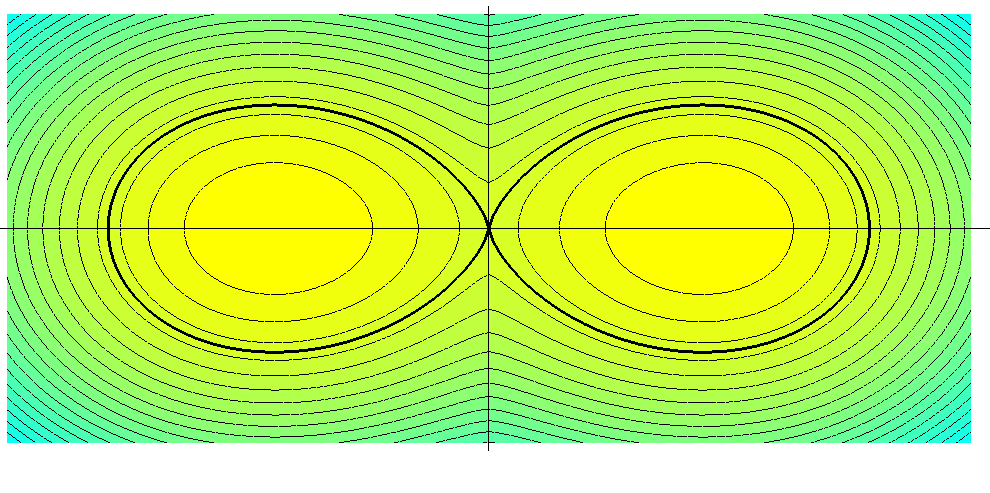}
    \caption{Level lines of the energy function. We have highlighted $[E=0]$. }
    \label{fig32}
\end{figure}

Next we consider the energy level passing through the origin
$$
  [E=0]:=\{(u,v)\in {\mathbb R}^2:\,E(u,v)=0\},
$$
which is expressed by the relation $v=\pm\sqrt{-2G(u)}$, where $G(u)$ is the potential energy.
By \eqref{3.5},
\begin{equation}
\label{3.7}
\sqrt{-2G(u)}=
\left(\frac{2a}{p+1}\right)^{1/2} |u|^{\frac{p+1}{2}}
\left(1- \frac{\lambda (p+1)}{2a} |u|^{1-p}\right)^{1/2}.
\end{equation}
The level set $[E=0]$ splits into 3 pieces:
$$
   [E=0]=\{(0,0)\} \cup \mathcal{O}^+ \cup \mathcal{O}^-,
$$
where
$$
  \mathcal{O}^+=\{y=\pm\sqrt{-2G(u)}: \quad 0< u\leq u_{H}\}
$$
with
\begin{equation}
\label{3.8}
u_H\equiv u_H(\lambda):= \left(\frac{2a}{\lambda(p+1)}\right)^{\frac{1}{1-p}}
= \frac{1}{\lambda^{\frac{1}{1-p}}}w_{H}, \qquad w_{H}:=\left(\frac{2a}{p+1}\right)^{\frac{1}{1-p}},
\end{equation}
and $\mathcal{O}^-$ is the symmetric of $\mathcal{O}^+$ with
respect to the $v$-axis. The sets $\mathcal{O}^{+}$ and
$\mathcal{O}^{-}$ are homoclinic loops around the equilibrium
points $P_0^{+}$ and $P_0^{-}$, respectively. As will be shown
later, these are degenerate homoclinic orbits in the sense that
they reach $(0,0)$ in a finite time because of the lack of a
Lipschitz condition at $u=0$.  This situation is reminiscent of
the one studied in \cite[Section 2]{CS} in the analysis of the
equation
$$
   -u''= s u^{\beta} - r u^{\alpha}
$$
for $r,s>0$ and $0<\alpha <\beta <1$. This equation arises in
modeling the Liebau phenomenon in blood circulation (see
\cite[Ch.8]{To}). The lack of Lipschitz continuity enables the
existence of solutions of \eqref{1.5} that vanish on some internal
subintervals of $(0,L)$ and are otherwise positive in $(0,L)$. It
shows that, even in the simplest prototypes of \eqref{1.1} one
cannot expect the solutions positive in $\Omega$ to satisfy $u>>0$.
In particular, the available maximum principle when $p\geq
1$ is lost for all $p\in (0,1)$. Naturally, such pathologies
straighten  the importance of Theorem \ref{th2.1}.
\par
The remaining solutions of \eqref{1.3} are uniquely determined by their initial conditions and
globally defined in time. This follows either by a direct inspection, or
using the results of \cite{R}, where the problem of the uniqueness for planar Hamiltonian
systems without a local Lipschitz condition for the vector field
was discussed. Therefore, the system  \eqref{3.3} defines a dynamical system
on the open set $\mathcal{A}= {\mathbb R}^2\setminus [E=0]$. Indeed, for every initial point $P\in \mathcal{A}$,
there is a unique solution,
$$
   z(\cdot;P)=(u(\cdot;P),v(\cdot;P))=(u(\cdot;P),u'(\cdot;P)),\qquad
z(0;P)=P
$$
of the system \eqref{3.3}, which is globally defined in time.
Actually, it is a periodic solution; possibly an equilibrium point
if $P=P_{0}^{\pm}$.
\par

Figure \ref{fig33} shows the graphs of three solutions $(x,u(x))$ of \eqref{1.3} with initial points
$(u_H,0),$ $(u_H-\varepsilon,0)$ and $(u_H+\varepsilon,0)$, for sufficiently small $\e>0$, where $u_H$ is the
positive abscissa of the crossing point of the homoclinic $\mc{O}^+$ with the $u$-axis, given in \eqref{3.8}.
Thus, the solution departing from $(u_H,0)$ corresponds to the homoclinic loop $\mathcal{O}^{+}$,
having $u_H$ as its maximum value and vanishing at a finite time $\pm T_{H}$
(the precise value of $T_{H}$ is given in \eqref{3.10}).
The solution departing from $(u_H-\varepsilon,0)$ is periodic and it oscillates around the positive equilibrium point
$P_0^{+}$, while the solution departing from $(u_H+\varepsilon,0)$ is periodic and it oscillates around the origin. This latter
solution, truncated on the open interval determined by its first left and right
zeros, provides us with a classical positive solution of \eqref{1.5}, with nonzero first derivative at the boundary points, for the appropriate value of $L>0$ (the distance between the two consecutive zeroes).

\begin{figure}[ht!]
    \centering
    \includegraphics[scale=0.45]{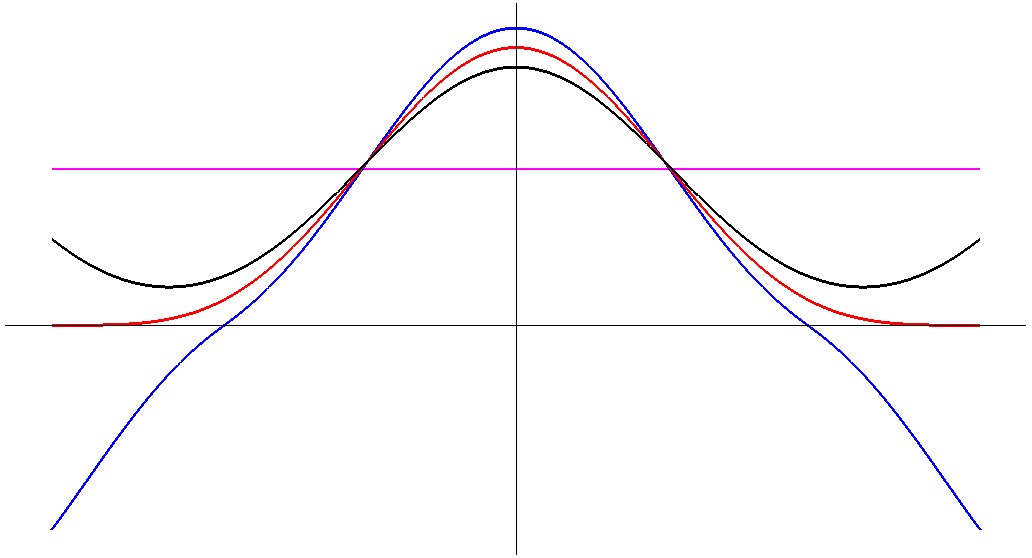}
    \caption{Three solutions of \eqref{1.3}. The simulation is made for
    the coefficients $\lambda=1,$ $a=2$ and $p=1/2.$ For the initial point we have
    taken $u_H=64/9$ (according to \eqref{3.8}) and $\varepsilon=1/2.$
    All these solutions have their first inflection point at level $u=u_0^+$
    (the abscissa of the nontrivial equilibrium point).
    }
    \label{fig33}
\end{figure}

Our analysis now proceeds by considering the three different situations illustrated in
Figure \ref{fig33}. Since we are interested in non-negative solutions,
we will restrict ourselves to the study of the solutions of \eqref{3.3}
in the half-plane $u\geq 0$.
For every $c>u_{H}$, the solution $u(\cdot;c):=u(\cdot;(c,0))$
of \eqref{1.3} with initial value $u(0)=c$ such that
$u'(0)=0$ is positive in a maximal open interval $I_c=(-T(c),T(c))$.  Moreover,
it vanishes with nonzero first derivative at $\pm T(c)$. Thus, these solutions
provide us with a positive solution of \eqref{1.5} for $L= 2T(c)$. In this paper, these strongly positive
solutions will be refereed to as classical positive solutions of \eqref{1.5}. They satisfy $u(x)>0$ for all $x\in (0,L)$, $u'(0)>0$
and $u'(L)<0$.
When some of these conditions fail but $u \not \equiv 0$ the solution
will be called degenerate. Thus,
if $u$ is a degenerate positive solution, then $u(x_0)=u'(x_0)=0$ for some $x_0\in [0,L]$. Naturally, a degenerate solution might vanish on some, or several, subintervals of $(0.L)$. Such positive solutions are said to be  highly degenerate.
\par
If, instead of  $c>u_{H}$, we pick $c\in (u_{0}^+,u_{H})$, then  $u(\cdot;c):=u(\cdot;(c,0))$
is a positive periodic solution oscillating around $P_0^+$, with half-period denoted by $\tau(c)$, such that
$\max u =c$ and $\min u = d(c)\in (0,u_0^+)$ with $G(d(c))=G(c)$. Thus, the equilibrium point $P_{0}^+$ is a (local) center and $\{(0,0)\}\cup \mathcal{O}^+$ is the boundary of the open region around
$P_{0}^+$ filled in by periodic orbits.
\par
Under these circumstances, since the orbits in $\mathcal{A}$
intersect the $v$-axis and
the line $u=u_{0}^+$ transversally, the Conley--Waz\.{e}wski theory
\cite{C} guarantees that the mappings
$(u_{H},+\infty)\ni c\mapsto T(c)$ and $(u_{0}^+,u_{H})\ni c\mapsto \tau(c)$ are continuous
(see also \cite[pp. 82-84]{HZ} for the details). However, the fact that
\begin{equation*}
\lim_{c\da u_H} T(c)=T(u_H)=T_H \quad \hbox{and}\quad \lim_{c\ua u_H} \tau(c)=T(u_H)=T_H
\end{equation*}
is far from obvious. It will be established later.
\par
As already commented above, the solution of \eqref{1.3} with
$u(0)=u_H$ and $u'(0)=0$, which parameterizes the degenerate homoclinic
loop $\mathcal{O}^+$, reaches the origin in a finite time. A
similar situation was discussed in \cite{PT} for $-u''=-a(x)u^p$, with $0<p<1$ and $a(x)$
positive and increasing. More precisely, in \cite{PT}
the existence of a  $\gamma>0$ for which
there are solutions of \eqref{1.3} with $u(0)=\gamma>0$ and $u'(0)=0$ such that
$u(\xi^*)=u'(\xi^*)=0$ for some $\xi^*=\xi(\gamma^*)$ was shown.
Some further developments in a similar vain
can be found in \cite{BPT}, in studying the non-negative solutions of
PDEs with a sign-indefinite weight, and in \cite{B}, where
solutions with the same property were found in
searching for  periodic solutions to the second-order equation
$-u''=a(x)g(u)$ with $g(u)u\geq 0$ and $a(x)$ changing sign, in
the ``sublinear case'' $g(u)/u\to +\infty$ for $u\to 0$.
The function $g(u)$ defined in \eqref{3.4}
does not satisfy these requirements.

\subsection{The degenerate homoclinic solution}\label{sub-7.1}
Consider the solution of \eqref{1.3} with $(u(0),u'(0))=(u_{H},0)$. As long as
$v(x)=u'(x)>0$, it follows from $u'(x) = \sqrt{-2G(u(x))}$ that the necessary time
to reach $u_2$ from $u=u_1$, with $0<u_1< u_2< u_{H}$, along $\mathcal{O}^+$ in the upper-half plane
is given through
\begin{equation*}
t(u_1,u_2)=\int_{u_1}^{u_2} \frac{du}{\sqrt{-2G(u)}}.
\end{equation*}
Letting $u_1\to 0^+$ and $u_2\to u_H^-$, and taking onto account that
$$
   u_{H}^{1-p}=\frac{2a}{\lambda(p+1)},
$$
it follows from \eqref{3.7} that the
vanishing time from the maximum point $u_{\max}=u_{H}$ to $u=0$ along $\mathcal{O}^+$
can be expressed through the \textit{convergent} improper integral
\begin{eqnarray*}
T_{H} & \equiv & t(0,u_H)=\int_{0}^{u_{H}} \frac{du}{\sqrt{-2G(u)}}=
\left(\frac{2a}{p+1}\right)^{-1/2}\int_{0}^{u_{H}} \frac{du}{u^{\frac{p+1}{2}}
\sqrt{1- \frac{\lambda (p+1)}{2a} u^{1-p}}}\\
&=&
\lambda^{-1/2}\left(\frac{2a}{\lambda(p+1)}\right)^{-1/2}
\int_{0}^{u_{H}} \frac{du}{u^{\frac{p+1}{2}}
\sqrt{1- u_{H}^{-(1-p)} u^{1-p}}}\\
&=&
\frac{1}{\sqrt{\lambda}} u_{H}^{-\frac{1-p}{2}}
\int_{0}^{u_{H}} \frac{du}{u^{\frac{p+1}{2}}
\sqrt{1- u_{H}^{-(1-p)} u^{1-p}}}.
\end{eqnarray*}
Thus, the change of variables $u=u_{H} s$, $0< u< u_{H}$, $0< s <1$, leads to
\begin{equation}
\label{3.9}
T_{H}= \frac{1}{\sqrt{\lambda}} u_{H}^{-\frac{1-p}{2}}
\int_{0}^{1} \frac{u_{H}\,ds}{u_H^{\frac{p+1}{2}} s^{\frac{p+1}{2}}
\sqrt{1- s^{1-p}}} = \frac{1}{\sqrt{\lambda}}
\int_{0}^{1} \frac{ds}{s^{\frac{p+1}{2}} \sqrt{1- s^{1-p}}}<\infty.
\end{equation}
By the symmetry of the problem, $T_H$ provides us with the extinction time
of the solution of \eqref{1.3} such that $u(0)=u_H$ and $u'(0)=0$.
Note that it is independent of the parameter $a$. The expression
\eqref{3.9} can be further simplified by the new change of variable
$$
   s^{1-p} = \sin^2\theta, \qquad 0<s<1,\quad 0< \theta <\frac{\pi}{2}.
$$
Indeed, since $(1-p)s^{-p} = 2 \sin \t \cos \t$, it is clear that
\begin{eqnarray*}
T_{H}&=&\frac{1}{\sqrt{\lambda}}
\int_{0}^{1} \frac{ds}{s^{\frac{p+1}{2}} \sqrt{1- s^{1-p}}}
=
\frac{1}{\sqrt{\lambda}(1-p)}
\int_{0}^{1} \frac{(1-p)s^{-p}\,ds}{s^{\frac{p+1}{2}}s^{-p}\sqrt{1- s^{1-p}}}\\
&=&
\frac{1}{\sqrt{\lambda}(1-p)}
\int_{0}^{1} \frac{(1-p)s^{-p}\,ds}{(s^{1-p})^{1/2}
\sqrt{1- s^{1-p}}}
=
\frac{1}{\sqrt{\lambda}(1-p)}
\int_{0}^{\pi/2} \frac{2 \sin\theta \cos\theta \,d\theta}{\sin\theta
\sqrt{1- \sin^2\theta}}.
\end{eqnarray*}
Therefore,
\begin{equation}
\label{3.10}
T_{H} = T_{H}(\lambda)= \frac{\pi}{\sqrt{\lambda}(1-p)}.
\end{equation}
Another way to prove \eqref{3.10} is to express $T_H$ as
\begin{equation}
\label{3.11}
   T_{H}\equiv T_H(\l)= \frac{1}{\sqrt{\lambda}}
\int_{0}^{1} \frac{ds}{s^{\frac{p+1}{2}} \sqrt{1- s^{1-p}}} =
\frac{1}{\sqrt{\lambda}}
\int_{0}^{1} \frac{ds}{s\sqrt{s^{p-1}- 1}}
\end{equation}
and observe that, up to an additive constant,
\begin{equation}
\label{3.12}
\int \frac{ds}{s\sqrt{s^{p-1}- 1}} =
-\frac{2\arctan\sqrt{s^{p-1}-1}}{1-p}.
\end{equation}
Finally, evaluating the definite integral of \eqref{3.11}, \eqref{3.10} follows readily.
\par
Figure \ref{fig34} illustrates the
independence of the extinction time $T_H$ with respect to the coefficient $a$. It plots
the graphs $(x,u(x))$ of the degenerate homoclinic solution on the interval $[-T_H,T_H]$
for a given value of $\lambda$ and several choices of the parameter $a$.

\begin{figure}[ht!]
    \centering
    \includegraphics[scale=0.45]{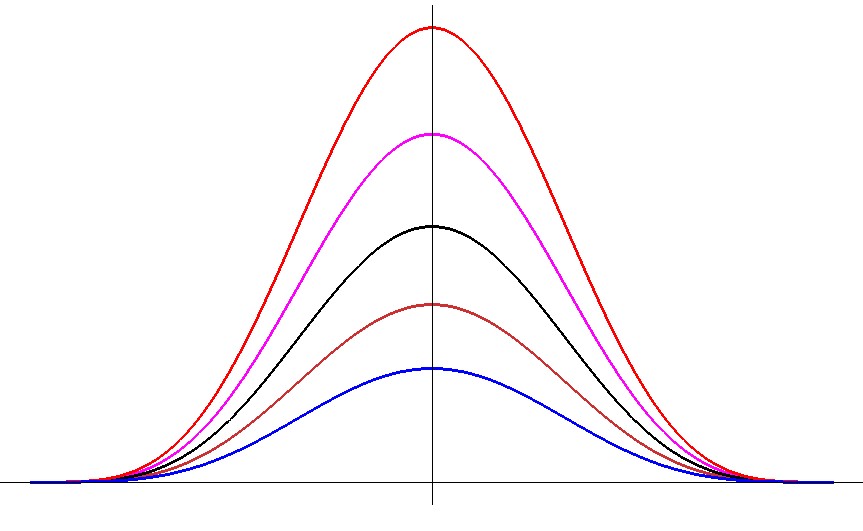}
    \caption{Five solutions of \eqref{1.3} having the same
    extinction time, $T_H$, for $(\lambda,p)$ constant and $a$ varying.
    The simulation has been made with $\lambda=1$ and $p=1/2$.  For the initial point we have
    taken $(u_H,0)$ (according to \eqref{3.8}). The solutions have been computed for the
     following series of values: $a=1$ ($u_H=16/9$), $a=5/4$ ($u_h=16/9$),
    $a=3/2$ ($u_H=4$), $a=7/4$ ($u_H=49/9$), $a=2$ ($u_H=64/9$).    }
    \label{fig34}
\end{figure}

Summarizing, the homoclinic loop $\mc{O}^+$ degenerates in the sense that the solution of \eqref{1.3}
with $u(0)=u_H$ and $u'(0)=0$, denoted by  $\hat u \equiv \hat u_\l$ in the sequel, runs through
the loop in a finite time
\begin{equation}
\label{eq-2TH}
2T_{H} \equiv 2T_{H}(\lambda)= \frac{2\pi}{\sqrt{\lambda}(1-p)}.
\end{equation}
This in strong contrast with the classical case of
$$
   -u''=-\lambda u + a|u|^{p-1}u
$$
when $p > 1$ and which has a similar portrait as in Figure \ref{fig32},
where an infinite amount of time is needed to traverse the loop. Note that
$\lim_{p\ua 1} T_H(\l)=\infty$ for all $\l>0$. Moreover, according to \eqref{3.8},
\begin{equation}
\label{eq-uH2}
\max \hat{u}_{\lambda}=u_H=\frac{1}{\lambda^{\frac{1}{1-p}}}w_{H},\qquad w_{H}:=\left(\frac{2a}{p+1}\right)^{\frac{1}{1-p}}.
\end{equation}
Since  \eqref{1.3} is autonomous, any shift in the $x$-variable also provides us with a solution
of the equation. Thus, for every $x_0\in\mathbb{R}$,
$\hat{u}_{\lambda}(\cdot-x_0)$ is a solution of \eqref{1.3} with
$u(x_0)=u_H$ and $u'(x_0)=0$, which is  unique on the interval
$$
   [x_0-T_H,x_0+T_H]={\rm supp\, } \hat{u}_{\lambda}(\cdot-x_0).
$$
In particular, since
\begin{equation}
\label{3.13}
   \lim_{\l\ua \infty}T_H(\l)=0,
\end{equation}
it follows that for every $x_0\in (0,L)$ and  $\l$
large enough that $0<x_0-T_H$ and $x_0+T_H<L$, the extended function
\begin{equation}
\label{3.14}
   \tilde u_{\l,x_0}(x) := \left\{ \begin{array}{ll} 0 & \quad \hbox{if}\;\; 0\leq x < x_0-T_H, \\
   \hat u_\l(x-x_0) & \quad \hbox{if}\;\; x_0-T_H\leq x \leq x_0+T_H, \\
   0 & \quad \hbox{if}\;\; x_0+T_H < x \leq L, \end{array}\right.
\end{equation}
is a positive solution of the nonlinear boundary value problem \eqref{1.5}, which is highly degenerate.
The precise values of $\l$ for which this construction can be done will be determined later. Thus, the maximum principle fails
for this type of sublinear nonlinearitiy.
Note that
due to \eqref{3.13}, for every $x_0\in (0,L)$,
\begin{equation}
\label{iii.15}
   \lim_{\l\ua \infty}\left( \lambda^{\frac{1}{1-p}} \tilde u_{\l,x_0}(x) \right) =
 \left\{ \begin{array}{ll} 0 & \quad \hbox{if}\;\; x\in [0,L]\setminus\{x_0\}, \\
  w_H = \left(\frac{2a}{p+1}\right)^{\frac{1}{1-p}} & \quad \hbox{if}\;\; x=x_0. \end{array}\right.
\end{equation}
Making the change of variable $v = \lambda^{\frac{1}{1-p}}u$, \eqref{1.5} becomes
\begin{equation}
\label{iii.16}
\left\{ \begin{array}{ll}-\frac{1}{\l}\D v=v-av^p&\quad \hbox{in}\;\;[0,L],\\[1ex]
v(0)=v(L)=0, & \end{array} \right.
\end{equation}
for $\D=\frac{d^2}{dx^2}.$ Then, \eqref{iii.15} provides us with
the limiting behaviour as $\l\ua \infty$ of the positive solutions
of the singular perturbation problem \eqref{iii.16}. Naturally,
for every integer $m\geq 1$, and $m$ given points $x_j\in (0,L)$,
$1\leq j\leq m$, such that
\begin{equation}
\label{iii.17}
  (x_i-T_H,x_i+T_H)\cap (x_j-T_H,x_j+T_H)=\emptyset \; \text{if }\; i\not=j,
\end{equation}
the superposition
\begin{equation}
\label{iii.18}
{u}_{\lambda,x_1,...,x_m}(x):=\sum_{j=1}^m \tilde{u}_{\lambda,x_j}(x),\qquad x\in [0,L],
\end{equation}
also provides us with a solution of \eqref{1.5} due to the assumption \eqref{iii.17}.
By \eqref{3.13}, the solution \eqref{iii.18} satisfies
\begin{equation*}
   \lim_{\l\ua \infty}\left( \lambda^{\frac{1}{1-p}} u_{\l,x_1,...,x_m}(x) \right) =
 \left\{ \begin{array}{ll} 0 & \quad \hbox{if}\;\; x\in [0,L]\setminus\{x_1,...,x_m\}, \\
  w_H = \left(\frac{2a}{p+1}\right)^{\frac{1}{1-p}} & \quad \hbox{if}\;\; x\in\{x_1,...,x_m\}. \end{array}\right.
\end{equation*}
Since $[0,L]$ is compact, this construction cannot be done if we choose a countable set of points $x_j\in (0,L)$, $j\geq 1$,
because the $x_j$'s would accumulate at some $x_0\in[0,L]$. Thus, \eqref{iii.17} cannot be satisfied. When
dealing with solutions in $\R$, a countable number of terms in the sum in \eqref{iii.18} is allowed provided that the points
 $x_j$, $j\geq 1$, are separated away from each other by a minimal distance.
Actually, as the solutions of \eqref{1.5} arise in pairs,
in the sense that $u$ solves \eqref{1.5} if, and only if,
$-u$ is also a solution, similar multiplicity results
can be given for sign-changing solutions of \eqref{1.5}. In this case, gluing together
positive and negative homoclinic loops,   solutions of \eqref{1.5} of the form
\begin{equation*}
{\mathscr{W}}(x)=\sum_{j=1}^m \delta_j\tilde{u}_{\lambda,x_j}(x), \qquad \d_j =\pm 1,
\end{equation*}
can be obtained provided \eqref{iii.17} holds.
Clearly, these degenerate situations are  due to the
lack of uniqueness for the associated Cauchy problem at the origin.
These results should be compared with the gluing of variational solutions found in \cite{MR}.
\par
Figure \ref{fig-7.5} shows a
family of nested homoclinics for $\lambda$ ranging from a smaller value (the external orbit)
to a larger one (the smaller orbit).
This displays the dependence of $\max \hat{u}_{\lambda}$ on the parameter $\lambda$.

\begin{figure}[ht!]
    \centering
    \includegraphics[scale=0.45]{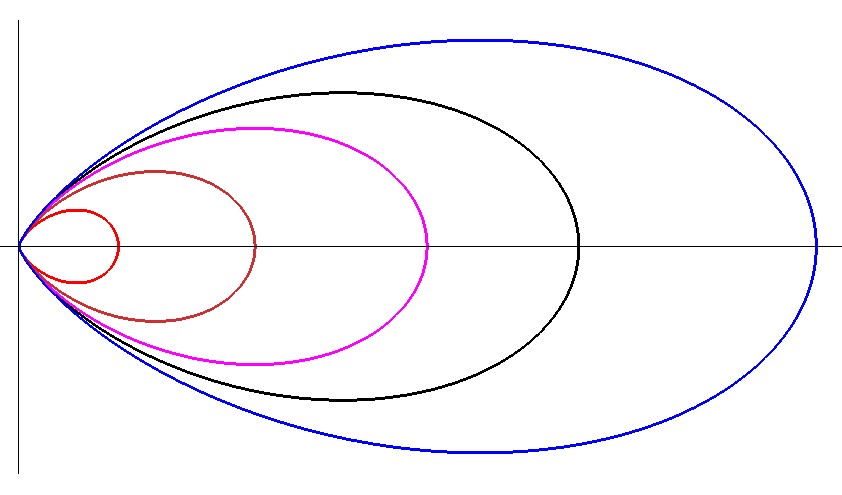}
    \caption{Different atypical homoclinics of system \eqref{3.3} for a fixed pair
    $(a,p)$ and different values of $\lambda$.
    The simulation is made for
    the coefficients $a=2$ and $p=2/3.$ The values chosen for $\lambda$ vary from
    $\lambda=1$ the external orbit) to $\lambda=2$ (the smallest orbit).
    }
    \label{fig-7.5}
\end{figure}

We conclude our analysis of these degenerate homoclinics
by providing an explicit expression for $\hat{u}_{\lambda}$ as follows.
The same argument as in
the beginning of Section 3.1 shows that the
distance, $|x|$, from $u(x)=u_1$ and $u(0)=\max u = u_H$ is given by
$$|x|=\int_{u(x)}^{u_H} \frac{d\xi}{\sqrt{-2G(\xi)}}.$$
Thus, repeating the same change of variables leading to
\eqref{3.9}, $\xi = u_H s$,  yields
$$
|x|= \frac{1}{\sqrt{\lambda}}
\int_{\frac{u(x)}{u_H}}^{1} \frac{ds}{s^{\frac{p+1}{2}}
\sqrt{1- s^{1-p}}}.
$$
Thus  it follows from  \eqref{3.11} and \eqref{3.12} that
$$
   |x| =\frac{2}{(1-p)\sqrt{\l}}\arctan \sqrt{\left(\tfrac{u(x)}{u_H}\right)^{p-1}-1}
$$
and hence,
$$
  \left(\frac{u(x)}{u_H}\right)^{p-1} =
   1 + \tan^2 \frac{(1-p)\sqrt{\lambda}|x|}{2}.
$$
Finally, because of \eqref{3.8}, we obtain the explicit expression
$$
\hat{u}_{\lambda}(x)= \left(\frac{2a}{\lambda(p+1)
\left(1 + \tan^2 \frac{(1-p)\sqrt{\lambda}|x|}{2}\right)}\right)^{\frac{1}{1-p}}
$$
for every $-T_H<x<T_H$, $T_H=T_{H}(\lambda)$. By construction, $\hat{u}_{\lambda}(x)>0$ for all $x\in (-T_H,T_H)$ and
\begin{equation}
\label{iii.19}
  \lim_{x\to \pm T_H} \hat{u}_{\lambda}(x)= 0, \qquad \lim_{x\to \pm T_H} \hat{u}'_{\lambda}(x)= 0.
\end{equation}

\subsection{The classical positive solutions}\label{sub-7.2}
This section characterizes the existence of  \emph{strongly
positive solutions} of \eqref{1.5}, also referred to  as
\emph{classical positive solutions} in this paper. Namely they are
solutions satisfying
\begin{equation}
\label{3.18}
  u(x)>0 \quad \hbox{for all}\;\; x\in (0,L),\quad u'(0)>0,\quad u'(L)<0.
\end{equation}
By the analysis already done in Section 3.1, they are given
by the solutions of \eqref{1.3} with $u(0)=c$ and $u'(0)=0$
for some $c>u_H$, and they are defined in some
interval $(-T(c),T(c))$ with $u(\pm T(c))=0$, $u'(-T(c))>0$, $u'(T(c))<0$, $T(c)=L/2$, and $u(x)>0$ for all $x\in (-T(c),T(c))$.
In the interval $(-T(c),0)$, where $u(x)>0$ and $u'(x)>0$,
the solution lies on the energy level
$E(u,v)=E(c,0)= G(c)$. From
$$
   u'(x) = \sqrt{2(G(c)-G(u(x)))},
$$
as long as $u'(x)=v(x)>0$, it is easily seen that the necessary ``time''
to reach $u=u_1\in (0,c)$ from $u=0$
along the orbit in the upper-half plane is given by
$$
   t(0,u_1)=\int_{0}^{u_1} \frac{du}{\sqrt{2(G(c)-G(u))}}.
$$
Thus, letting $u_1\ua c$ and recalling the definition of $G(u)$,  allows us to conclude that the
vanishing time $T(c)$ from the maximum point $\max u =c$ to $u=0$ along the level line passing through $(c,0)$
can be expressed via the convergent improper integral
\begin{equation}
\label{iii.20}
T(c)=\int_{0}^{c} \frac{du}{\sqrt{2(G(c)-G(u))}}=
\int_{0}^{c} \frac{du}
{\sqrt{\lambda(c^2-u^2) - \frac{2a}{p+1}\bigl(c^{p+1}-u^{p+1}\bigr)}}.
\end{equation}
(See  \cite{LMZ} for similar computations). Performing the change of variables $u=c s$, $0< u< c$, $0< s <1$, it follows from \eqref{iii.20} that
\begin{equation*}
T(c)=
\int_{0}^{1} \frac{c\,ds}
{\sqrt{\lambda c^2(1-s^2) - \frac{2ac^{p+1}}{p+1}\bigl(1-s^{p+1}\bigr)}}
= \frac{1}{\sqrt{\lambda}}\int_{0}^{1} \frac{ds}
{\sqrt{(1-s^2) - \frac{2a}{\lambda(p+1)}\frac{1}{c^{1-p}}\bigl(1-s^{p+1}\bigr)}}.
\end{equation*}
This equation together with \eqref{3.8} yields
\begin{equation}
\label{c.21}
T(c)= \frac{1}{\sqrt{\lambda}}\int_{0}^{1} \frac{ds}
{\sqrt{(1-s^2) - \left(\frac{u_H}{c}\right)^{1-p}\bigl(1-s^{p+1}\bigr)}}.
\end{equation}
Observe that, since $0<p<1$, we have that $1-s^2 > 1-s^{p+1}$ for all $0<s<1$.
Therefore
the mapping $(u_{H},+\infty)\ni c\mapsto T(c)$ is strictly decreasing.
For this particular case,
the continuity
of $T$ with respect to $c$ also follows directly from \eqref{c.21} without
having to invoke to any general result
on dynamical systems. Note that
$$
   \lim_{c\ua +\infty} T(c) = \frac{1}{\sqrt{\lambda}}\int_{0}^{1} \frac{ds}
{\sqrt{1-s^2}}= \frac{\pi}{2\sqrt{\lambda}}
$$
while, due to \eqref{3.9} and \eqref{3.10},
\begin{align*}
   \lim_{c\da u_H^+} T(c) &  = \frac{1}{\sqrt{\lambda}}\int_{0}^{1} \frac{ds}
{\sqrt{(1-s^2) - \bigl(1-s^{p+1}\bigr)}}\\ & =
\frac{1}{\sqrt{\lambda}} \int_{0}^{1}
\frac{ds}{s^{\frac{p+1}{2}}\sqrt{1- s^{1-p}}} = T_{H} =
\frac{\pi}{\sqrt{\lambda}(1-p)}.
\end{align*}
Thus, the map $T(c)$ extends continuously to $[u_{H},+\infty)$  by setting $T(u_{H})= T_{H}$.  Figure \ref{Fig6}
shows the graph of the mapping $c\mapsto T(c)$. In the light of these facts,
it is clear that the boundary value problem \eqref{1.5} has a
strongly positive solution if, and only if,
\begin{equation}
\label{c.22}
   \lim_{c\ua +\infty} T(c) = \frac{\pi}{2\sqrt{\l}} < \frac{L}{2}<\frac{\pi}{(1-p)\sqrt{\l}}=T_H.
\end{equation}
Moreover, this solution is unique, because there is a unique value of $c>u_H$, say $c(\l)$, such that $T(c(\l))=L/2$.
When this occurs, the unique positive solution of \eqref{1.5} is given by the $L/2$-shift of the unique positive solution of \eqref{1.3} such that $u(L/2)=c(\l)$ and $u'(L/2)=0$.

\begin{figure}[ht!]
    \centering
    \includegraphics[scale=0.4]{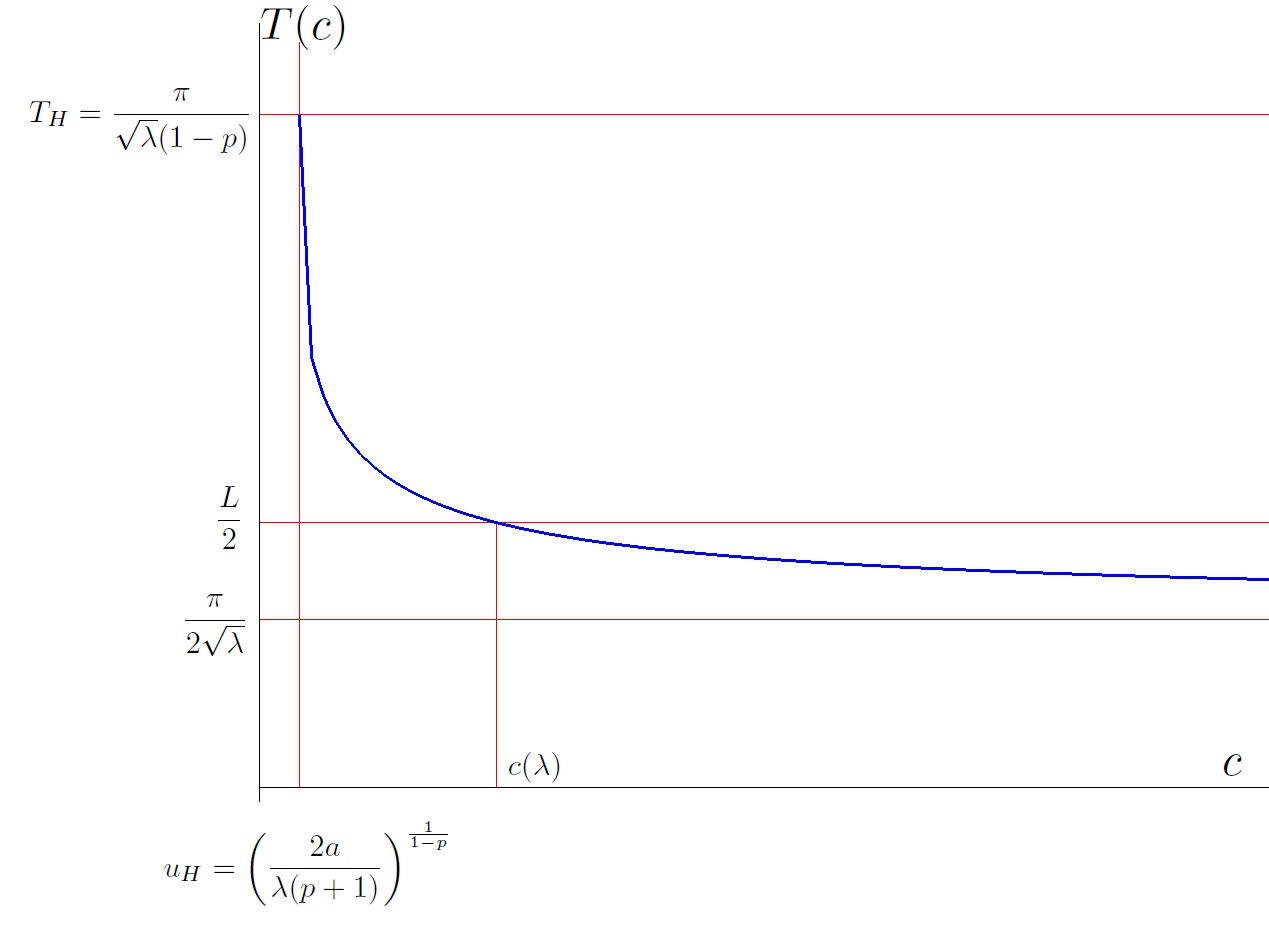}
    \caption{The graph of the time map $T(c)$.
    It is a decreasing function defined in $[u_H,\infty)$.
    By construction, the problem \eqref{1.5} has a classical
    positive solution if, and only if, \eqref{c.22} holds.
    The graph comes from a numerical simulation for the case $a=1,$ $\lambda=1$ and $p=1/2.$}
    \label{Fig6}
\end{figure}

Since \eqref{c.22} can be written equivalently as
\begin{equation}
\label{iii.23}
  \s_1=\left( \frac{\pi}{L}\right)^2 <\l < \left(\frac{2}{1-p}\right)^2\s_1\equiv \Sigma_1(p),
\end{equation}
the next result holds.

\begin{theorem}
\label{th3.1}
The problem \eqref{1.5}  has a classical positive solution if, and only if, $\l\in (\s_1,\Sigma_1(p))$, i.e.,
if \eqref{iii.23} holds. When \eqref{iii.23} holds, the solution is unique. Moreover, denoting it by ${u}_{\lambda}$, it satisfies
\begin{equation*}
   \max {u}_{\lambda} = {u}_{\lambda}(L/2) = c(\lambda),
\end{equation*}
and  the mapping $(\sigma_1,\Sigma_1(p))\ni\lambda \mapsto c(\lambda)$ is continuous and
strictly decreasing with
\begin{equation}
\label{iii.24}
   \lim_{\lambda\da \sigma_1} c(\lambda) =+\infty, \qquad
\lim_{\lambda\ua \Sigma_1(p)} c(\lambda)
=u_H(\Sigma_1(p))=\left(\frac{2a}{\Sigma_1(p)(p+1)}\right)^{\frac{1}{1-p}}.
\end{equation}
\end{theorem}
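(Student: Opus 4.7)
The plan is to extract Theorem \ref{th3.1} directly from the time-map analysis already assembled in Section \ref{sub-7.2}, by reading the existence question as the solvability of the scalar equation $T(c)=L/2$ on $(u_H,+\infty)$.

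First I would argue that every classical positive solution $u$ of \eqref{1.5} corresponds, via the phase portrait of \eqref{3.3}, to an orbit issuing from $(0,v_0)$ with $v_0>0$, attaining its maximum $c$ when $v=0$, and returning symmetrically to $(0,-v_0)$. Energy conservation gives $v_0^2=2G(c)\geq 0$, hence $G(c)\geq 0$, which forces $c\geq u_H$. The equality $c=u_H$ is excluded because the corresponding orbit is the degenerate homoclinic $\mathcal{O}^+$ parametrized by $\hat u_\lambda$, for which \eqref{iii.19} yields $u'(\pm T_H)=0$, contradicting $u'(L)<0$ in \eqref{3.18}. Values $c\in(u_0^+,u_H)$ give closed orbits around $P_0^+$ that never touch the $v$-axis, so they do not produce Dirichlet solutions. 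Hence necessarily $c>u_H$, and by the reversibility and autonomy of \eqref{1.3} the maximum is attained at $L/2$, so that $L/2=T(c)$.

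Next I would use formula \eqref{c.21} to establish that $T:[u_H,+\infty)\to\R$ is continuous and strictly decreasing. Since $0<p<1$ implies $1-s^2>1-s^{p+1}$ on $(0,1)$, the integrand of \eqref{c.21} is strictly increasing in $(u_H/c)^{1-p}$, so $T$ strictly decreases as $c$ grows. Combined with the boundary limits already computed in the excerpt,
$$\lim_{c\downarrow u_H}T(c)=T_H=\frac{\pi}{(1-p)\sqrt{\lambda}},\qquad \lim_{c\to+\infty}T(c)=\frac{\pi}{2\sqrt{\lambda}},$$
this shows $T$ is a homeomorphism from $(u_H,+\infty)$ onto $(\pi/(2\sqrt{\lambda}),T_H)$. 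Therefore $T(c)=L/2$ admits a unique root $c(\lambda)>u_H$ precisely when $\pi/(2\sqrt{\lambda})<L/2<\pi/((1-p)\sqrt{\lambda})$, which after squaring rearranges to \eqref{iii.23}, i.e. $\sigma_1<\lambda<\Sigma_1(p)$. Existence, uniqueness and the identity $\max u_\lambda=u_\lambda(L/2)=c(\lambda)$ all follow.

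Finally, for the behavior of $\lambda\mapsto c(\lambda)$: for each fixed $c$, the map $\lambda\mapsto T(c;\lambda)$ is strictly decreasing, because the prefactor $1/\sqrt{\lambda}$ decreases and $u_H(\lambda)=(2a/(\lambda(p+1)))^{1/(1-p)}$ also decreases with $\lambda$, so $(u_H/c)^{1-p}$ decreases and the integrand decreases. Combined with $\partial_c T<0$, the implicit function theorem applied to $T(c(\lambda);\lambda)=L/2$ delivers continuity and strict monotone decrease of $c(\lambda)$. The limits in \eqref{iii.24} come from passing to the boundary of the admissible range: as $\lambda\downarrow\sigma_1$, the level $L/2=\pi/(2\sqrt{\sigma_1})$ matches $\inf_c T(c;\sigma_1)$, forcing $c(\lambda)\to+\infty$; as $\lambda\uparrow\Sigma_1(p)$, $L/2\to T_H(\Sigma_1(p))$, forcing $c(\lambda)\downarrow u_H(\Sigma_1(p))$. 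The only real care required in the argument is disentangling the joint $\lambda$-dependence of $c$ and $u_H(\lambda)$ to get strict monotonicity of $T$ in $\lambda$; once that sign is fixed, the remainder of Theorem \ref{th3.1} is a direct consequence of the explicit time-map integral.
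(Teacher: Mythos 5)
Your proposal is correct, and its first half (existence, uniqueness, $\max u_\lambda=u_\lambda(L/2)=c(\lambda)$ via the phase portrait and the solvability of $T(c)=L/2$ on $(u_H,\infty)$) is exactly the route the paper takes, since all of that is read off from the monotone time map \eqref{c.21} and the limits $\pi/(2\sqrt{\lambda})$ and $T_H$. Where you genuinely diverge is in the study of $\lambda\mapsto c(\lambda)$: the paper rewrites the equation $T(c)=L/2$ as $\lambda=\Psi(u_H/c)$ with the single-variable map $\Psi$ of \eqref{iii.25}, shows $\Psi$ is strictly increasing from $(0,1)$ onto $(\sigma_1,\Sigma_1(p))$ via \eqref{iii.26}--\eqref{iii.27}, and then obtains \eqref{iii.24} (and, implicitly, the monotonicity, since $c(\lambda)=u_H(\lambda)/\Psi^{-1}(\lambda)$ with $u_H$ decreasing and $\Psi^{-1}$ increasing) essentially for free; you instead work with the two-variable time map $T(c;\lambda)$, check $\partial_\lambda$-monotonicity for fixed $c$ (your computation that $(u_H(\lambda)/c)^{1-p}=\tfrac{2a}{\lambda(p+1)c^{1-p}}$ decreases in $\lambda$ is correct), and combine it with the $c$-monotonicity and a limiting argument. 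Both work; the paper's $\Psi$-trick is slicker for the limits \eqref{iii.24}, while your argument makes the strict decrease of $c(\lambda)$ more explicit than the paper does. Two small points to tighten: the appeal to the implicit function theorem is not needed and would require justifying differentiability of the improper integral in $c$ --- joint continuity of $T(c;\lambda)$ (e.g. by dominated convergence in \eqref{c.21}) together with the strict monotonicity in each variable already yields continuity and strict decrease of $c(\lambda)$; and when comparing $\lambda_1<\lambda_2$ you should note that $c(\lambda_2)$ may fall outside the domain $(u_H(\lambda_1),\infty)$ of $T(\cdot;\lambda_1)$, in which case $c(\lambda_2)\leq u_H(\lambda_1)<c(\lambda_1)$ holds trivially, so the monotonicity conclusion is unaffected.
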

\begin{proof}
We already know that $\l\in (\s_1,\Sigma_1(p))$ is necessary and sufficient for the existence
of a $c(\l)>u_H$ such that $T(c(\l))=L/2$. Moreover, $c(\l)$ is unique by the monotonicity of $T(c)$. Thus, the classical positive solution, denoted by $u_\l$,
is unique and it satisfies $u_\l(L/2)=c(\l)$.
It also follows from \eqref{iii.20} that $T(c)$ is a continuous function of $\l$.
Thus, the definition of $c(\l)$ guarantees the continuous dependence of
$c(\l)$ with respect to $\l$.
\par
Next observe that due to \eqref{iii.20}, the identity  $T(c)=L/2$ holds for some $c>u_H$  if, and only if,
\begin{equation*}
\lambda=\Psi(u_H/c),
\end{equation*}
where $\Psi$ stands for the map
\begin{equation}
\label{iii.25}
\Psi(\xi):= \left(\frac{2}{L}\right)^2 \left(\int_{0}^{1}
\frac{ds}{\sqrt{(1-s^2) - \xi^{1-p}\bigl(1-s^{p+1}\bigr)}}\right)^2,
\qquad 0< \xi < 1.
\end{equation}
From \eqref{iii.25}, since $p<1$,  it is easily seen that $\Psi: (0,1)\to \mathbb{R}$ is strictly
increasing, with
\begin{equation}
\label{iii.26}
\lim_{\xi\da 0}\Psi(\xi)= \left(\frac{2}{L}\right)^2 \left(\frac{\pi}{2}\right)^2
= \left(\frac{\pi}{L}\right)^2 = \sigma_1.
\end{equation}
Moreover using \eqref{3.9} and \eqref{3.10}, we find that
\begin{equation}
\label{iii.27}
\lim_{\xi\ua 1}\Psi(\xi)= \left(\frac{2}{L}\right)^2
\left(\int_{0}^{1}
\frac{ds}
{\sqrt{s^{p+1}-s^2}}\right)^2 =
\left(\frac{2}{L}\right)^2\left(\frac{\pi}{1-p}\right)^2
=\left(\frac{2}{1-p}\right)^2\sigma_1=\Sigma_1(p).
\end{equation}
In particular, $\Psi: (0,1)\to \Psi(0,1)=(\sigma_1,\Sigma_1(p))$.
Finally, \eqref{iii.24} follows readily from \eqref{iii.26} and \eqref{iii.27}.
\end{proof}

Note that, since $\lambda\mapsto c(\l)$ is continuous,
the
map $\lambda\mapsto {u}_{\lambda}\in C^1_{0}[0,L]$ is also continuous. Moreover, by the uniqueness of $u_\l$, it follows from \eqref{iii.24} that in the interval $(\s_1,\Sigma_1(p))$, the component $\ms{D}^+$  whose existence was established by Theorem \ref{th2.1} consists of the continuous curve of classical positive solutions defined by
$$
  \mathscr{C}^+:= \{(\lambda,{u}_{\lambda})\; :\; \l\in (\s_1,\Sigma_1(p))\}.
$$
By the construction carried out in Sections 3.1 and 3.2, these classical positive solutions approximate the degenerate solution $\hat u$ as $\l\ua \Sigma_1(p)$. Actually, $\hat u$ is the unique solution of \eqref{1.3} satisfying
$u(0)=u_H$ and $u'(0)=0$. According to \eqref{iii.19}, we already know that $T(u_H)=L/2$, $\hat u(x)>0$ for all $x\in (-L,L)$, $\hat u(-L)=\hat u(L)=0$, and $\hat u'(-L)=\hat u'(L)=0$.
Note that  $\lambda=K_1(p)$ if, and only if, $T_H(\lambda)=L/2$.
Naturally, since \eqref{1.3} is autonomous, the shift
\begin{equation}
\label{eq-hatu1}
    {u}_{\Sigma_1(p)}(x):=\hat{u}\bigl(x-\tfrac{L}{2}\bigr), \qquad x\in [0,L],
\end{equation}
provides us with a degenerate positive solution of \eqref{1.5}.
Therefore, in the interval $(\s_1,\Sigma_1(p)]$
the component $\ms{D}^+$ consists of $\ms{C}^+$
plus the degenerate positive solution $(\Sigma_1(p),u_{\Sigma_1(p)})$.

\subsection{Highly degenerate positive solutions}\label{sub-7.3}
Next, for every $\l > \Sigma_1(p)$, we consider the highly
degenerate positive solutions $(\l,u_\l)$, with
$$
  u_\l(x):= \tilde u_{\l,L/2},
$$
where $\tilde u_{\l,L/2}$ stands for the  solution defined in \eqref{3.14} with
$x_0=L/2$. Note that $\l > \Sigma_1(p)$ if, and only if, $T_H<L/2$. By construction, since $\ms{D}^+$ is connected, $(\l,u_\l)\in\ms{D}^+$ for all $\l>\Sigma_1(p)$, by the continuity of the map $\l\mapsto u_\l$, $\l>\s_1$. Therefore,
$$
  \mathscr{C}^+:= \{(\lambda,{u}_{\lambda})\; :\; \l >\s_1\}\subset \ms{D}^+.
$$
The solutions on the curve $\ms{C}^+$ satisfy the following properties:

\begin{itemize}
\item  $\mathscr{C}^+$ is the graph of a continuous curve
$\lambda\mapsto {u}_{\lambda}$ which is defined for all $\lambda >\sigma_1$;

\item the map $\lambda\mapsto \|{u}_{\lambda}\|_{\infty}$ is strictly
decreasing, with
$$
  \lim_{\lambda\da \sigma_1}\|u_{\lambda}\|_{\infty}=+\infty\quad \hbox{and}\quad
  \lim_{\l\ua\infty}\|u_\l\|_\infty=0.
$$
Moreover, \eqref{iii.15} holds.

\item ${u}_{\lambda}$ is a classical positive  solution of \eqref{1.5} if, and only if,
$\l\in (\sigma_1,\Sigma_1(p))$.  Moreover, $\ms{C}^+=\ms{D}^+$ in $(\s_1,\Sigma_1(p))$.

\item At $\lambda=\Sigma_1(p)$,  ${u}_{\lambda}$ satisfies $u_\l(x)>0$ for all $x\in (0,L)$, $u_\l(0)=u_\l(L)=0$, and
$u_\l'(0)=u_\l'(L)=0$. Thus, this solution degenerates.

\item For every $\lambda>\Sigma_1(p),$ the solution ${u}_{\lambda}$ is symmetric about $x=L/2$ and
$\max {u}_{\lambda} = {u}_{\lambda}(L/2)$. Moreover,
$$
  {u}_{\lambda}(x)>0\quad \hbox{for all}\quad x\in (\tfrac{L}{2}-T_H(\lambda),\tfrac{L}{2}+T_H(\lambda)),
$$
while
$$
 {u}_{\lambda}\equiv 0\quad \hbox{on} \quad [0,L] \setminus (\tfrac{L}{2}-T_H(\lambda),\tfrac{L}{2}+T_H(\lambda)).
$$
Figure \ref{fig37} shows some of those  solutions.
\end{itemize}

\begin{figure}[ht!]
    \centering
    \includegraphics[scale=0.45]{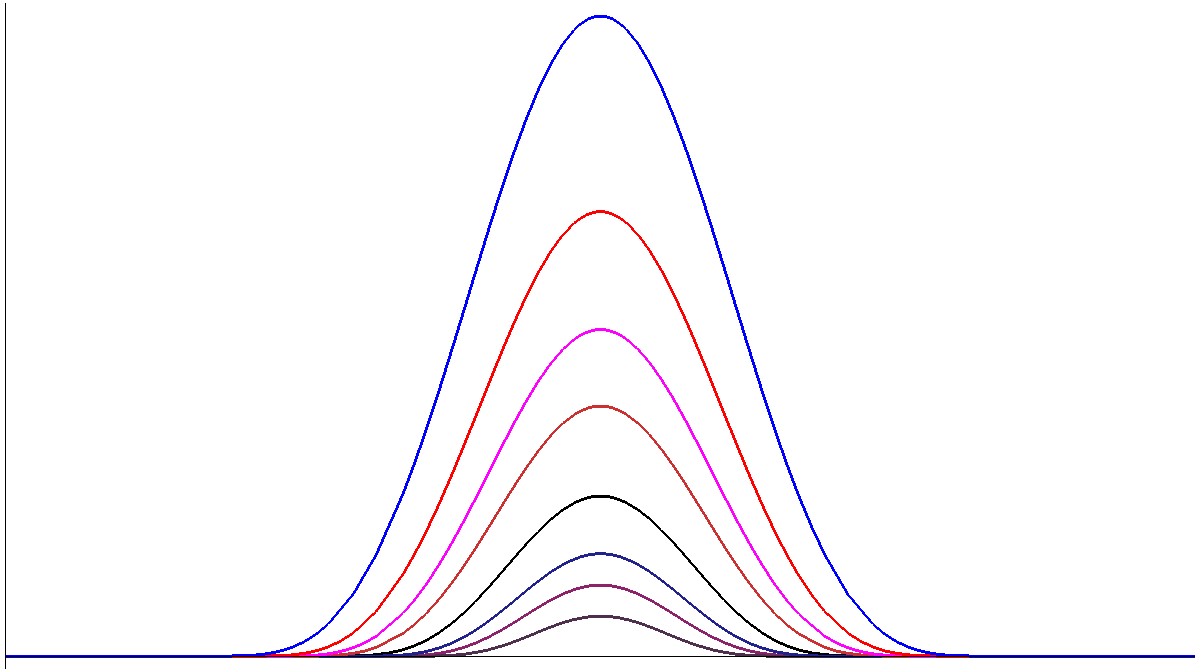}
    \caption{ The  plots of some highly degenerate solutions of \eqref{1.5}  for a series of values of $\l\in[1,4]$.
    The solutions decreased as $\l$ increased. The numerical experiment has been done with  $a=2$ and $p=1/2$.     We have taken $L = 6\pi > 4\pi=2T_H(1)$ to guarantee that  $u_\l$ be highly degenerate by choosing $\l \geq 1 > K_1(p)$.  }
    \label{fig37}
\end{figure}

Although $\ms{D}^+$ consists of a continuous curve
on $(\s_1,\Sigma_1(p))$,
it will be shown that $\ms{D}^+$ has a more complex structure than
$\ms{C}^+$ for $\l>\Sigma_1(p)$. Namely, for every $\l>\Sigma_1(p)$,
it contains
an additional continuous curve which can be constructed as follows.
For every $\l>\Sigma_1(p)$ and
$$
   \vartheta \in \left[  -\tfrac{L}{2} + T_H(\lambda),\tfrac{L}{2} - T_H(\lambda)\right],
$$
there is a new solution
$$
   {w}_{\lambda,\vartheta}(x):= {u}_{\lambda}\bigl(x-\vartheta\bigr), \quad x\in [0,L].
$$
Thus we have a new one-parameter family of highly degenerate solutions of \eqref{1.5}.
Since $\ms{D}^+$ is connected, and $w_{\l,0}=u_\l\in \ms{D}^+$,
it is clear that
$$
  w_{\l,\vartheta}\in \ms{D}^+\quad \hbox{for all}\quad  \l >\Sigma_1(p)\quad \hbox{and}\quad
  \vartheta \in \left[  -\tfrac{L}{2} + T_H(\lambda),\tfrac{L}{2} - T_H(\lambda)\right].
$$
Figure \ref{fig38} shows a
few solutions $w_{\lambda,\vartheta}$ for a fixed $\lambda >\Sigma_1(p)$ and
a series of values of $\vartheta$ varying from  $\vartheta^-\equiv \vartheta^-_{\lambda}:=
-\tfrac{L}{2} + T_H(\lambda)$ to $\vartheta^+\equiv \vartheta^+_{\lambda}:=\tfrac{L}{2} - T_H(\lambda)$.
For $\vartheta=0$, we get the \lq\lq central\rq \rq \, symmetric solution
$w_{\l,0}=u_\l\in\ms{C}^+\subset \ms{D}^+$.
\par
By construction, for $\vartheta=\vartheta^-$, we have that
$$
    {w}_{\lambda,\vartheta^-}(x) = {u}_{\lambda}(x-T_H),\qquad x \in [0,L].
$$
This function satisfies
$$
  {w}_{\lambda,\vartheta^-}(0)={w}_{\lambda,\vartheta^-}'(0)=0, \quad {w}_{\lambda,\vartheta^-}(x)>0 \;\;
  \hbox{for all}\;\; x\in (0,2T_H),\quad \hbox{and }\;\; {w}_{\lambda,\vartheta^-}=0 \;\; \hbox{on}\;\;
  [2T_H,L].
$$
Similarly, for $\vartheta=\vartheta^+$, we have that
$$
    {w}_{\lambda,\vartheta^+}(x) = {w}_{\lambda}(x-L+T_H),\qquad x\in [0,L],
$$
and this function satisfies
$$
  {w}_{\lambda,\vartheta^+}=0 \;\; \hbox{on}\;\; [0,L-2T_H],\quad {w}_{\lambda,\vartheta^+}(x)>0 \;\;
  \hbox{for all}\;\; x\in (L-2T_H,L),\quad \hbox{and }\;\;
  {w}_{\lambda,\vartheta^+}(L)={w}_{\lambda,\vartheta^+}'(L)=0.
$$

\begin{figure}[ht!]
    \centering
    \includegraphics[scale=0.4]{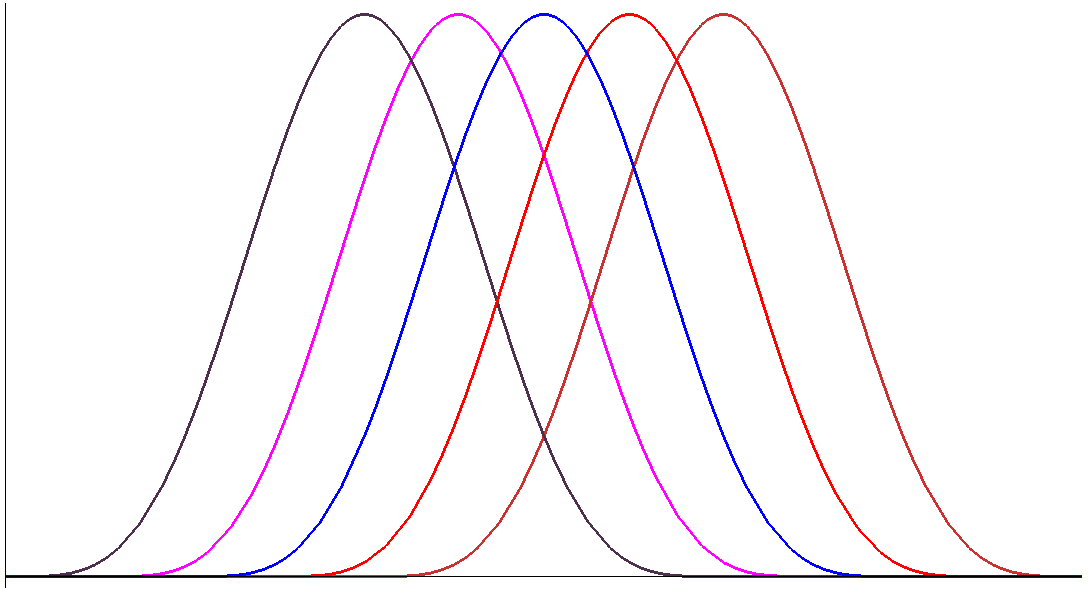}
    \caption{ The positive solutions ${w}_{\lambda,\vartheta}$ for a series of values of $\vartheta$ varying from $\vartheta^-_{\lambda}$ to $\vartheta^+_{\lambda}$. The simulation has been performed with $a=2$, $\lambda=1$, and $p=1/2$.  In this case, $2T_{H}(\lambda)= 4\pi$. We have taken $L = 6\pi > 2T_{H}(\lambda)$ so that
    $\lambda=1 > K_1(p)$.  The figure shows some shifts of the central symmetric solution, $u_\l$ for some values of the secondary parameter $\vartheta$.   }
    \label{fig38}
\end{figure}

The topological structure of the curve $\mathscr{C}^+$ together
with the shifted solutions $w_{\l,\vartheta}$ for all
$\l>\Sigma_1(p)$ has been sketched in Figure \ref{fig39}. All
these solutions are part of the component $\ms{D}^+$ whose
existence was established by Theorem \ref{th2.1}.  Although, for
every $\l\in (\s_1,\Sigma_1(p)]$, $\ms{D}^+$ consists of
$(\l,u_\l)$, we see that for every $\l>\Sigma_1(p)$, the
component $\ms{D}^+$ contains a one-dimensional simplex made by a
segment of shifted solutions  from $u_\l$ in the $x$-component.
Thus, for every $\l>\Sigma_1(p)$, \eqref{1.5} has a continuum of
positive highly degenerate solutions.

\begin{figure}[ht!]
    \centering
    \includegraphics[scale=0.45]{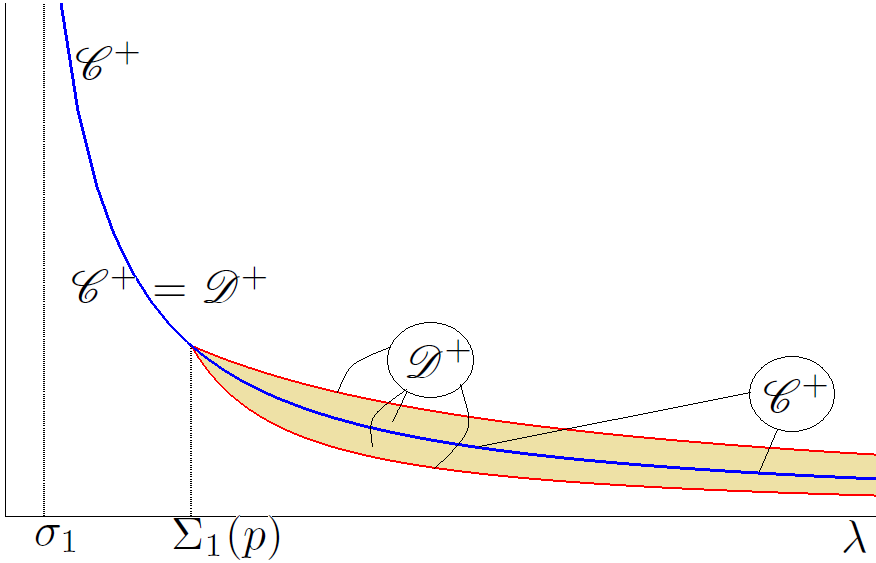}
    \caption{ The topological structure of the component $\ms{D}^+$. It is generated by the central fiber
    $\ms{C}^+=\{(\l,u_\l)\;:\; \l>\s_1\}$ through the shifts $u_{\l,\vartheta}$ for $\l>\Sigma_1(p)$. }
    \label{fig39}
\end{figure}

\subsection{Multibump solutions}\label{sub-7.4}
All the solutions constructed in the previous two sections have a
single bump in  $(0,L)$. In order to complete the classification
of the positive solutions of \eqref{1.5},  both classical and
degenerate, we now consider the possibility of  solutions with
multiple bumps in the interval $[0,L]$. In terms of the dynamics
in the phase plane, these solutions can be described as
trajectories making multiple transitions of the homoclinic loop
$\mathcal{O}^{+}.$ These transitions are separated from each other
by periods of ``rest'' where they remain at the origin. One, or
several of these rest intervals might shrink to a single point.
Since the time required to complete a homoclinic loop is
$2T_H(\lambda)$, with $T_H(\lambda)$ given by \eqref{3.10}, $j$
non-overlapping copies of the function ${u}_{\lambda}$ can exist
in (0,L) if, and only if,
\begin{equation}
\label{eq-Kj}
  2T_H(\l)\leq \frac{L}{j}\quad \Leftrightarrow \quad
  \l\geq  \Sigma_j(p):= \left(\frac{2}{1-p}\right)^2 \sigma_j,
\end{equation}
where $\sigma_j:= (j\pi/L)^2$ is the $j$-th eigenvalue of $-D^2$ in $[0,L]$ under homogeneous Dirichlet
boundary conditions. Figure \ref{fig310} illustrates the case $\lambda=K_2(p)$, plotting a solution of
\eqref{1.5} with  two positive humps in $(0,L)$. This solution vanishes with its first derivative at $x\in\{0,L/2,L\}$.

\begin{figure}[ht!]
    \centering
    \includegraphics[scale=0.45]{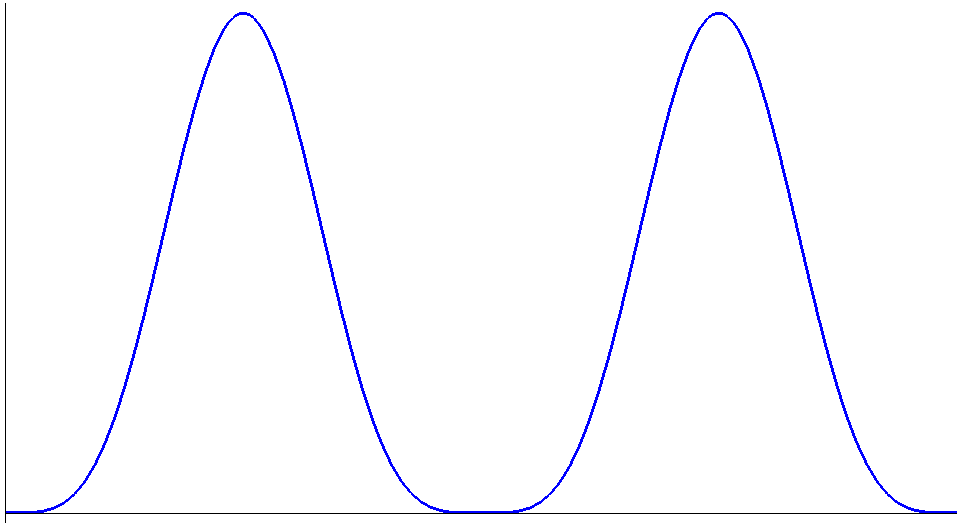}
    \caption{    A solution with two bumps computed for the choices $a=2$, $\lambda=1$ and $p=1/2$.
    We have taken $L=4 T_{H}(\lambda)$ so that $\lambda=K_2(p)$.  }
    \label{fig310}
\end{figure}

Next the structure of the component of (degenerate) positive
solutions with $j\geq 2$  bumps of the problem \eqref{1.5} will be
described. First, fix $j\geq 2$, suppose $\l\geq \Sigma_j(\l)$,
and consider $j$  points
$$
  0<x_1<x_2<\cdots <x_j<L
$$
such that
\begin{equation}
\label{deltas}
\begin{split}
   \delta_1 & :=x_1- T_H(\lambda)\geq 0,\\
     \delta_i & := x_i-x_{i-1}- 2T_H(\lambda)\geq 0 \;\; (2\leq i\leq j),\\
  \delta_{j+1} & :=L-x_{j}-T_H(\lambda)\geq 0,\\
   \sum_{i=1}^{j+1}\delta_i & = L-2 j T_H(\lambda).
\end{split}
\end{equation}
Then by construction, the function
\begin{equation}
\label{iii.28}
 {u}_{\lambda,x_1,\dots,x_j}(x)=
\sum_{i=1}^{j}\tilde{u}_{\lambda,x_i}(x),\qquad x\in [0,L],
\end{equation}
provides us with a degenerate positive solution of \eqref{1.5} having a bump at each of the points
$x_j$. In \eqref{iii.28}, $\tilde{u}_{\l,x_i}$ is the degenerate positive solution defined
in \eqref{3.14}, with $x_0=x_i$.
\par
The following  result shows that, at any of these bumps, the
positive solution reaches
$$
   u_H(\l) \equiv \left( \frac{2a}{(p+1)\l}\right)^\frac{1}{1-p}.
$$
The proof is omitted, since it is a direct consequence of the analysis
previously performed on the homoclinic loops and it also comes from formula
\eqref{3.8}.

\begin{lemma}
\label{le3.1}
Suppose $0\leq \a<\b\leq L$ and $u$ is a positive solution of
\begin{equation*}
  \left\lbrace\begin{array}{l}  -u''=\l u -a |u|^{p-1}u  \quad \hbox{in }[\a,\b],\\
  u(\a)=u(\b)=0,
  \end{array}\right.
\end{equation*}
such that $u(x)>0$ for all $x\in (\a,\b)$. Then,
$$
   \|u\|_{\infty,[\a,\b]} = u(\tfrac{\a+\b}{2}) \geq
    \left(\tfrac{2a}{(p+1) \l}\right)^\frac{1}{1-p}
$$
\end{lemma}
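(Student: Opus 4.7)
My plan is to prove Lemma \ref{le3.1} by a direct energy argument, exploiting conservation of the Hamiltonian $E(u,v) = \frac{v^2}{2}+G(u)$ introduced in \eqref{3.6}. Because $u$ is $C^2$ on $[\a,\b]$ (the nonlinearity $|u|^{p-1}u$ is continuous at $u=0$ for $0<p<1$), the map $x\mapsto E(u(x),u'(x))$ is constant on $[\a,\b]$.

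First I would evaluate $E$ at the boundary. Since $u(\a)=u(\b)=0$, we obtain
\begin{equation*}
   E=\tfrac{1}{2}(u'(\a))^{2}=\tfrac{1}{2}(u'(\b))^{2}\ge 0.
\end{equation*}
Moreover, because $u>0$ in $(\a,\b)$ while $u(\a)=u(\b)=0$, one has $u'(\a)\ge 0$ and $u'(\b)\le 0$, so $u'(\a)=-u'(\b)$.

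Second, I would locate the maximum. Let $x^{*}\in(\a,\b)$ be any point where $u$ attains its maximum on $[\a,\b]$; then $u'(x^{*})=0$ and the constancy of $E$ gives
\begin{equation*}
   E=G(u(x^{*}))=\tfrac{\l}{2}u(x^{*})^{2}-\tfrac{a}{p+1}u(x^{*})^{p+1}.
\end{equation*}
Combining this with $E\ge 0$ and dividing by $u(x^{*})^{p+1}>0$ yields $u(x^{*})^{1-p}\ge \tfrac{2a}{\l(p+1)}$, which is exactly $u(x^{*})\ge \left(\tfrac{2a}{(p+1)\l}\right)^{1/(1-p)}=u_{H}(\l)$. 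This already gives the lower-bound part of the lemma.

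Third, I would show $x^{*}=\tfrac{\a+\b}{2}$. Here I would use the time-reversal symmetry of $u''=-g(u)$: if $u$ is a solution, so is $\tilde u(x):=u(\a+\b-x)$, and $\tilde u$ has the same boundary values $0$ at $\a,\b$ and the same energy $E$. On the set where $u>0$ the nonlinearity is locally Lipschitz, so on each phase-plane half ($u'>0$ and $u'<0$) the orbit is uniquely determined by the energy level $\{E(u,v)=E\}$, and the level set $[E=G(u(x^{*}))]$ intersects $\{v=0\}$ only at $(u(x^{*}),0)$; hence the maximum is unique. By the symmetry $u(x)\equiv \tilde u(x)$ along the orbit, this unique maximum must lie at the midpoint $\tfrac{\a+\b}{2}$, and so $\|u\|_{\infty,[\a,\b]}=u(\tfrac{\a+\b}{2})$. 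Alternatively, one may compute the half-period along the energy level $E$ from the formula $dx=du/\sqrt{2(E-G(u))}$ exactly as in \eqref{iii.20}, which gives the same time to rise from $0$ to $u(x^{*})$ as to fall back, hence $x^{*}-\a=\b-x^{*}$.

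The only potentially delicate step is the symmetry claim, because uniqueness for the Cauchy problem fails at $u=0$; however, uniqueness on $\{u>0\}$ together with the time-reversal $(u,v)\mapsto(u,-v)$ of the Hamiltonian system \eqref{3.3} is all one needs, since the orbit segment connecting the two zeros lies entirely in $\{u>0\}$ except at its endpoints. The rest is a one-line algebraic manipulation of $G(u(x^{*}))\ge 0$.
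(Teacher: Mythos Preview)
Your argument is correct, and it makes explicit precisely the energy/phase-plane considerations that the paper invokes without spelling out: the paper omits the proof entirely, saying it ``is a direct consequence of the analysis previously performed on the homoclinic loops and it also comes from formula \eqref{3.8}.'' Your Step~2 is exactly the algebraic content of \eqref{3.8}: from $E=G(u(x^{*}))\ge 0$ one reads off $u(x^{*})\ge u_{H}(\l)$. For the symmetry, your time-map alternative is the cleaner route and is literally what Section~3.2 computes (the integral \eqref{iii.20} shows the rise time from $0$ to the maximum equals the fall time). The time-reversal version is also fine, but the sentence ``by the symmetry $u\equiv\tilde u$ along the orbit'' hides the real step: you need that $u$ and $\tilde u$ share the same \emph{initial data} at $\a$, namely $(0,u'(\a))=(0,-u'(\b))$, and then invoke uniqueness of the IVP. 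For $u'(\a)>0$ this is immediate (the paper notes uniqueness holds off the level $[E=0]$); for $u'(\a)=0$ the positivity hypothesis $u>0$ on $(\a,\b)$ forces the solution to leave the origin at once and trace $\mathcal{O}^{+}$ exactly, so $u$ is the shifted homoclinic $\hat u_{\l}$, which is symmetric. Either way you land on $x^{*}=\tfrac{\a+\b}{2}$.
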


As a byproduct, for every $i, j \geq 1$ with $i\neq j$, the components of solutions with $i$ bumps cannot touch
any of the components with $j$ bumps. Actually, adapting the argument of Section 3.3, for every $j\geq 2$
and $\l>\Sigma_j(p)$ (fixed), one can construct from $u_{\l,x_1,...,x_j}$  a $j$--dimensional simplex consisting of
homotopic solutions to  $u_{\l,x_1,...,x_j}$  by slightly changing some, or several,
of the points $x_i$, $1\leq i\leq j$. The underlying simplex of solutions for a given $\l>\Sigma_j(p)$
expands as $\l$ increases, as sketched in Figure \ref{fig310b}, where, for evey $j\geq 2$, we have denoted by
$\ms{D}_j^+$ the component of highly degenerate positive solutions of \eqref{1.5} with $j$ bumps in $(0,L)$.
By the construction, it is easily seen that, necessarily, $\ms{D}^+_j$ is the unique component of positive
solutions with $j$ bumps, since any pair of solutions of this type are homotopic.

\begin{figure}[ht!]
    \centering
    \includegraphics[scale=0.45]{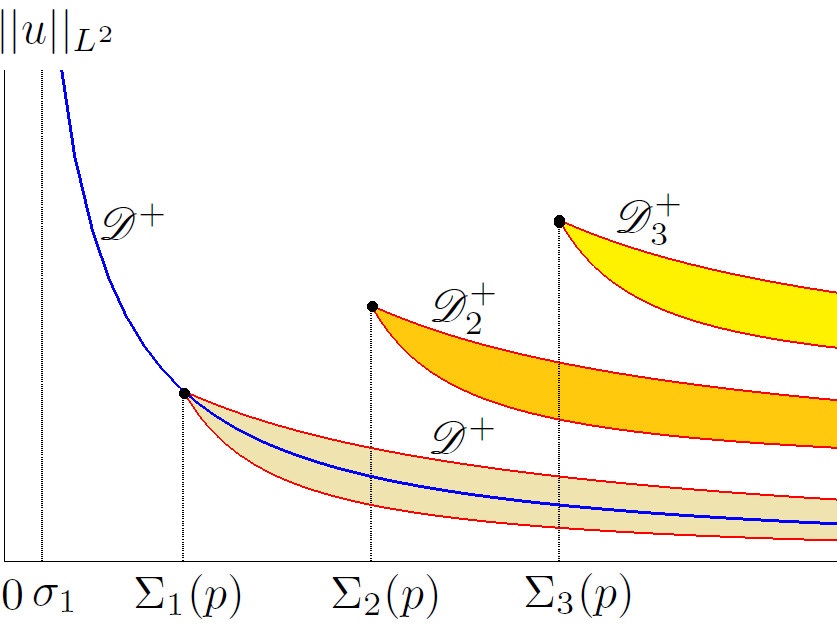}
    \caption{The components of positive solutions with $j\geq 1$ bumps, $\ms{D}_j^+$. We are denoting $\ms{D}_1^+\equiv \ms{D}^+$.     }
    \label{fig310b}
\end{figure}

\section{Final comments}\label{sub-7.end}
The discussion of Section 3 shows that even in the
simplest case of \eqref{1.5}, the problem \eqref{1.1}
can have multiple degenerate positive solutions
for sufficiently large $\l$.
Actually, in the context of the problem \eqref{1.5},
for each integer $j\geq 1$ and every $\l > \Sigma_j(p)$,
the problem \eqref{1.5} possesses a $\kappa$-simplex of positive solutions
with $\kappa$ bumps for all $\kappa \in\{1,...,j\}$, although the component $\ms{D}^+\equiv \ms{D}_1^+$ constructed in
Theorem \ref{th2.1} consists of solutions with a single bump.
\par
The fact that ${u}_{\Sigma_1(p)}$ vanishes with its first derivatives at the boundary of its domain,
allows us to extend it periodically to the whole real line. Moreover,
gluing together copies of ${u}_{\Sigma_1(p)}$ with the null function, we can also construct
positive subharmonic solutions of any order. Actually, the following result concerning \lq\lq chaotic solutions\rq\rq\,  holds.

\begin{theorem}
\label{th4.1}
Assume $L=2T_{H}(\Sigma_1(p))$.  Then, given any doubly infinite sequence of symbols
$\mathbf{s}=(s_i)_{i\in \mathbb{Z}}\in\{0,1\}^{\mathbb{Z}},$
there exists a solution $u=u_{\mathbf{s}}$
of \eqref{1.5} with the following properties:
\begin{itemize}
\item[{\rm (a)}] $u\geq 0$ and $\max u=u_{H}(K_1(p))$ if $u\neq 0$. Moreover,
$u(iL)=u'(iL)=0$ for all $i\in\mathbb{Z}$;
\item[{\rm (b)}] $u|_{((i-1)L,iL)}\equiv 0$ if $s_i=0$, while $u(x)> 0$
for all $x\in ((i-1)L,iL)$ if $s_i=1$;
\item[{\rm (c)}] $u=u_{\mathbf{s}}$ is a subharmonic solution of order $\kappa$ if the sequence $\mathbf{s}$ is $\kappa$-periodic (of minimal period $\kappa$).
\end{itemize}
\begin{proof}
Given any sequence of two symbols
$\mathbf{s}=(s_i)_{i\in \mathbb{Z}}\in\{0,1\}^{\mathbb{Z}}$,
a solution $u=u_{\mathbf{s}}$ can be constructed by gluing together
the null solutions in the intervals $[(i-1)L,iL]$ if $s_i=0$
and the solution
$$
  \hat{u}(x-(i-1)L-T_H(\Sigma_1(p)))=\hat{u}(x-(i-1)L-\tfrac{L}{2}),
\quad\text{if }\; (i-1)L\leq x \leq iL,
$$
if $s_i=1$. Then, it is straightforward to check that all the
assertions of the Theorem hold.
\end{proof}
\end{theorem}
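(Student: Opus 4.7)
My strategy is constructive: for each $\mathbf{s}\in\{0,1\}^{\mathbb{Z}}$, I would assemble $u_{\mathbf{s}}$ by placing either a centered shifted copy of the degenerate homoclinic $\hat u\equiv \hat u_{\Sigma_1(p)}$ (from Section 3.1) or the null function on each block $I_i := [(i-1)L,\,iL]$, according as $s_i=1$ or $s_i=0$. The hypothesis $L=2T_H(\Sigma_1(p))$ makes the support of one bump coincide exactly with one block, so there is no overlap and no gap.

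The first step is to record the key features of $\hat u$ already proved: $\hat u\in \mc{C}^2$, $\hat u$ is supported in $[-L/2,L/2]$, it is positive on the interior, vanishes with zero first derivative at $\pm L/2$, and $\max \hat u = u_H(\Sigma_1(p))$. With these in hand I would define
\begin{equation*}
u_{\mathbf{s}}(x) := \begin{cases} 0 & \text{if } x\in I_i \text{ and } s_i = 0,\\[0.5ex] \hat u\bigl(x-(i-1)L-\tfrac{L}{2}\bigr) & \text{if } x\in I_i \text{ and } s_i = 1, \end{cases}
\end{equation*}
for every $i\in\mathbb{Z}$.

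The second step is to verify that $u_{\mathbf{s}}$ is a bona fide classical $\mc{C}^2$-solution of $-u'' = \lambda u - a|u|^{p-1}u$ on all of $\mathbb{R}$. Inside each block the equation holds either trivially (on a null block) or by the definition of $\hat u$ (on an active block). At each junction $x=iL$ the one-sided limits of $u$ and $u'$ all equal $0$, regardless of the pair $(s_i,s_{i+1})$, since the active piece vanishes with its first derivative at the endpoints of its support. Moreover the ODE itself forces $u''(iL)=-\lambda\cdot 0 + a\cdot 0 = 0$ from each side, and the one-sided limit of $\hat u''$ at $\pm L/2$ equals $0$ for the same reason; hence $u_{\mathbf{s}}\in \mc{C}^2(\mathbb{R})$.

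The third step is to read off the conclusions. Property (a) follows because each active block of $u_{\mathbf{s}}$ is a translate of $\hat u$ with maximum $u_H(\Sigma_1(p))$, and $iL$ is the right endpoint of the support of any adjacent bump. Property (b) is immediate from the construction and the support of $\hat u$. For (c), if $\mathbf{s}$ is $\kappa$-periodic then $u_{\mathbf{s}}(x+\kappa L)=u_{\mathbf{s}}(x)$; the pattern of nonzero blocks of $u_{\mathbf{s}}$ uniquely encodes $\mathbf{s}$, so minimality of $\kappa$ for $\mathbf{s}$ transfers to minimality of $\kappa L$ for $u_{\mathbf{s}}$.

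The main obstacle is the $\mc{C}^2$-matching at the countably many junction points $x=iL$, because the nonlinearity $|u|^{p-1}u$ is not Lipschitz at $0$ and one could fear either loss of regularity or failure of the ODE there. This is resolved precisely by the degeneracy $\hat u(\pm L/2)=\hat u'(\pm L/2)=0$ that distinguishes the sublinear regime $0<p<1$; everything else is routine bookkeeping.
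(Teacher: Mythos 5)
Your construction is exactly the paper's: glue shifted copies of the degenerate homoclinic $\hat u$ onto the blocks with $s_i=1$ and the null function onto the blocks with $s_i=0$, using $L=2T_H(\Sigma_1(p))$ so that each bump's support fills one block. The paper dismisses the verification as ``straightforward to check''; you have simply supplied those routine details (the $\mc{C}^2$-matching at the junctions via $\hat u(\pm L/2)=\hat u'(\pm L/2)=0$ and the minimal-period argument for (c)), so the proposal is correct and coincides with the paper's proof.
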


The dynamical interpretation of the result is that in the phase plane $(u,u')$,
we have a trajectory which makes one loop following $\mathcal{O}^+$
in the time-interval $((i-1)L,iL)$ when $s_i=1,$
while the solution remains at the origin
in the time-interval $((i-1)L,iL)$, when $s_i=0$.
In any case, the solution is at
the origin at the times $x=iL$ for all $i\in\mathbb{Z}.$
\par
The same result holds if $L>2T_H(\Sigma_1(p))$,  but then, although
$$
  \max u|_{((i-1)L,iL)} = u_H(\Sigma_1(p))>0\quad \hbox{if}\;\; s_i=1,
$$
the solution $u=u_{\mathbf{s}}$ might vanish on
some points inside the interval $((i-1)L,iL)$.
\par
To add more complexity, one can consider chaotic dynamics on three symbols
$\{-1,0,1\}$ with a solution, which, in addition to
the above described dynamics, also traverses
a loop along the degenerate homoclinic $\mathcal{O}^-$ during the interval $((i-1)L,iL)$ when $s_i=-1$.
\par
The dynamics described in Theorem \ref{th4.1}
represents an example of chaos in the coin-tossing sense as described in \cite{KS}.
By that we mean for any given sequence
of ${\rm Heads}=0$ and ${\rm Tails}=1,$ a solution can be constructed with the
same associated coding (as in in $(b)$ of Theorem \ref{th4.1}).
Such behavior does not seem to fit into the typical definitions
of chaotic dynamics in the literature \cite{D,HK}.
Indeed most definitions in the literature
consider the iterates of a given single-valued continuous map that is
often a homeomorphism. However in our situation, the
Poincar\'{e} map associated with the solutions of system \eqref{3.3}
is not single-valued in the whole plane due to
the lack of uniqueness of the Cauchy problem associated
to \eqref{3.3}. A similar case of \lq\lq chaos\rq\rq\, for scalar first order
differential equations without uniqueness $u'=f(t,u)$ was
considered in \cite{OO}. Later, it was established in \cite{P} that, in spite of the
lack of uniqueness of the associated Cauchy problem, there is a way
to enter into the classical definitions of chaotic dynamics by
defining a suitable subsystem of the Bebutov flow
(for the basic theory about the Bebutov flows, see \cite{Se,Si}).
\par
Following a similar approach as in \cite{P}, we can
show that Theorem \ref{th4.1} provides indeed a true form of chaotic dynamics
if properly interpreted. To this end, we proceed as follows.
First of all we introduce the set
$$\mathscr{M}=\{u_{\mathbf{s}}:\,
\mathbf{s}=(s_i)_{i\in \mathbb{Z}}\in\{0,1\}^{\mathbb{Z}}\}$$
of all the solutions $u_{\mathbf{s}}$ of Theorem \ref{th4.1}.
The projection map
$$
   \Pi:\mathscr{M}\to \{0,1\}^{\mathbb{Z}}, \qquad \Pi(u_{\mathbf{s}}) = \mathbf{s},
$$
is a bijection. Actually, it is  a homeomorphism if we take on $\mathscr{M}$ the
topology of uniform convergence on compact sets and on
$\{0,1\}^{\mathbb{Z}}$ the standard product topology. Now, the
$L$-translation on the $x$-component $\Phi_{L}: \mathscr{M}\to \mathscr{M}$
defined by $\Phi_{L}: u(\cdot)\mapsto u(\cdot+L),$
is \textit{conjugate} with the shift automorphism (also called Bernoulli shift)
$$
    \vartheta: \{0,1\}^{\mathbb{Z}} \to \{0,1\}^{\mathbb{Z}}, \qquad
    \vartheta:(s_i)_{i\in {\mathbb Z}}\mapsto (s_{i+1})_{i\in {\mathbb Z}}.
$$
In other words,
$$
  \Pi\circ \Phi_L = \vartheta \circ \Pi.
$$
This means that $\Phi_{L}$ on $\mathscr{M}$ has
all the dynamical properties of $\vartheta$ on $\{0,1\}^{\mathbb{Z}}.$
Conjugation with the shift map on the set of sequence of $m\geq 2$
symbols is the best known form of ``chaos'' as it fulfills all the
typical features (such as transitivity, density of periodic points,
sensitive dependence on initial conditions, positive topological entropy)
usually associated with the concept of chaotic dynamics (see \cite{D,HK}
for the main definitions and concepts,
as well as \cite[Section I.5]{Sm} in the classical work of Smale).
\par
Symbolics dynamics results have also been obtained for more general settings
using gluing arguments from the calculus of variations. See, e.g., the survey paper
\cite{MR} and the references therein. For such gluing arguments, the heads and tails are replaced
by a collection of basic solutions of the equations.
Each formal concatenation of a collection of basic
solutions that makes sense geometrically then corresponds to
a nearby actual solution of the equation
that is obtained variationally. The nonuniqueness
in the current problem replaces the variational arguments
by simple direct concatenation, e.g., we get a $2$ bump solution by gluing $2$
one bump solutions that are
suitably separated.

\end{document}